\documentclass[12pt,fleqn]{amsart}

\usepackage{amssymb}

\textwidth=36cc
\baselineskip 16pt
\textheight 620pt
\headheight 20pt
\headsep 20pt
\usepackage{color}
\usepackage{enumerate}
\topmargin 0pt
\footskip 40pt
\parskip 0pt
\oddsidemargin 10pt
\evensidemargin 10pt


\newtheorem{theorem}{Theorem}[section]
\newtheorem{lemma}[theorem]{Lemma}

\newtheorem{problem}[theorem]{Problem}
\newtheorem{corollary}[theorem]{Corollary}

\theoremstyle{definition}
\newtheorem{definition}[theorem]{Definition}

\newtheorem{xca}[theorem]{Construction}

\newtheorem{axiom}[theorem]{Axiom}
\theoremstyle{remark}
\newtheorem{remark}[theorem]{Remark}
\numberwithin{equation}{section}

\begin{document}

\title[Properties of compacta that do not reflect]{On properties of compacta that do not reflect\\
in small continuous images}

\author[M.\ Magidor]{Menachem Magidor}
\address{Hebrew University of Jerusalem}
\email{mensara@savion.huji.ac.il}

\author[G.\ Plebanek]{Grzegorz Plebanek}
\address{Instytut Matematyczny, Uniwersytet Wroc\l awski}
\email{grzes@math.uni.wroc.pl}

\thanks{
This research started while the authors were visiting fellows at the Isaac Newton Institute for Math-
ematical Sciences, Cambridge, in the program "Mathematical, Foundational and Computational Aspects of the Higher Infinite (HIF)'' supported by EPSRC Grant Number EP/K032208/1.\\
The first author was also supported by the Simon Foundation.
The second author was partially supported by NCN grant 2013/11/B/ST1/03596 (2014-2017).}

\subjclass[2010]{Primary 54A25, 54C05, 46B50,   03E10.}

\begin{abstract}
Assuming that there is a stationary set  of ordinals of countable cofinality in $\omega_2$ that does not reflect, we prove that there exists
a compact space which is not Corson compact and whose all continuous images of weight $\le\omega_1$ are Eberlein
compacta. This yields an example of a Banach space of density $\omega_2$ which is not weakly compactly generated but all
its subspaces of density $\le\omega_1$ are weakly compactly generated.

We also prove that under Martin's axiom countable functional tightness does not reflect in small continuous images of compacta.
\end{abstract}

\maketitle

\newcommand{\con}{\mathfrak c}
\newcommand{\eps}{\varepsilon}
\newcommand{\alg}{\mathfrak A}
\newcommand{\algb}{\mathfrak B}
\newcommand{\algc}{\mathfrak C}
\newcommand{\ma}{\mathfrak M}
\newcommand{\pa}{\mathfrak P}
\newcommand{\BB}{\protect{\mathcal B}}
\newcommand{\AAA}{\mathcal A}
\newcommand{\CC}{{\mathcal C}}
\newcommand{\cF}{{\mathcal F}}
\newcommand{\FF}{{\mathcal F}}
\newcommand{\GG}{{\mathcal G}}
\newcommand{\LL}{{\mathcal L}}
\newcommand{\NN}{{\mathcal N}}
\newcommand{\UU}{{\mathcal U}}
\newcommand{\VV}{{\mathcal V}}
\newcommand{\HH}{{\mathcal H}}
\newcommand{\DD}{{\mathcal D}}
\newcommand{\RR}{\protect{\mathcal R}}
\newcommand{\ide}{\mathcal N}
\newcommand{\btu}{\bigtriangleup}
\newcommand{\hra}{\hookrightarrow}
\newcommand{\ve}{\vee}
\newcommand{\we}{\cdot}
\newcommand{\de}{\protect{\rm{\; d}}}
\newcommand{\er}{\mathbb R}
\newcommand{\qu}{\mathbb Q}
\newcommand{\supp}{{\rm supp} }
\newcommand{\card}{{\rm card} }
\newcommand{\wn}{{\rm int} }
\newcommand{\ult}{{\rm ult}}
\newcommand{\vf}{\varphi}
\newcommand{\osc}{{\rm osc}}
\newcommand{\cov}{{\rm cov}}
\newcommand{\cf}{{\rm cf}}
\newcommand{\ol}{\overline}
\newcommand{\me}{\protect{\bf v}}
\newcommand{\ex}{\protect{\bf x}}
\newcommand{\stevo}{Todor\v{c}evi\'c}
\newcommand{\cc}{\protect{\mathfrak C}}
\newcommand{\scc}{\protect{\mathfrak C^*}}
\newcommand{\lra}{\longrightarrow}
\newcommand{\sm}{\setminus}
\newcommand{\uhr}{\upharpoonright}

\newcommand{\sub}{\subseteq}
\newcommand{\ms}{$(M^*)$}
\newcommand{\m}{$(M)$}
\newcommand{\MA}{$\protect{{\mathsf MA}}(\omega_1)$}
\newcommand{\clop}{\protect{\rm Clop} }
\newcommand{\fX}{\mathfrak X}
\newcommand{\fY}{\mathfrak Y}
\newcommand{\fZ}{\mathfrak Z}
\section{Introduction}

There is a significant amount of research related to properties of structures that reflect in substructures of smaller cardinality, see
e.g.\  Bagaria, Magidor, Sakai \cite{BMS}, Koszmider \cite{Ko99,Ko05}, Fuchino and Rinot \cite{FR11}, Tall \cite{Ta07}.
Reflection phenomena in topology are usually studied according to  the following pattern:

\begin{problem}\label{r1}
Does a topological space $X$ has a property (P) provided all its subspaces of small
cardinality have property (P)?
\end{problem}

Tall \cite{Ta07} surveys  results and problems of this type. Recently Tkachuk  \cite{Tk12} and Tkachuk and Tkachenko \cite{TT15} have investigated
which topological properties reflect in small continuous images which, in particular, amounts to asking the following kind of questions.

\begin{problem}\label{r2}
Does a topological space $X$ has property (P) provided  every continuous image of $X$ of weight $\le\omega_1$ has property (P)?
\end{problem}

Eberlein compacta and Corson compacta are two well-studied classes of compact spaces related to functional analysis, see the next section.
Answering two questions of type \ref{r2} posed in \cite{TT15}, we show in this note that it is relatively consistent that
neither  Eberlein compactness nor Corson compactness reflects in continuous images  of weight $\le\omega_1$. In fact, assuming that there is
a stationary set $S\sub\omega_2$ of ordinals of countable cofinality such that $S\cap\alpha$ is  stationary in no  $\alpha<\omega_2$, we construct a compact space $K$ of weight $\omega_2$ which simultaneously  answers in the negative
both the  questions: $K$  is  not Corson compact while all its
images of weight at most $\omega_1$ are Eberlein compacta that can be embedded into a Hilbert space.
In addition, our space $K$ gives partial negative answers to problems posed by  Jard\'on and Tkachuk (\cite{JT15}, Questions 4.13-15) on the reflection of type \ref{r1}
for Corson compacta and related classes.
The construction of the space is given in section 3 and uses the familiar idea of a ladder system associated to a given set $S\sub\omega_2$;
see, for instance, Pol \cite{Po79} and Ciesielski and Pol \cite{CP84} where a construction of this type was used to solve some problems on the structure of
$C(K)$ spaces. The existence of a stationary set in $\omega_2$ with the above-mentioned properties
is known to have an impact on other topological problems, see Fleissner \cite{Fl80}, 3.10.

In the framework of Banach space theory our result shows that it is relatively consistent that there exists a Banach space $X$ of density $\omega_2$ such that $X$ is not weakly compactly generated while every subspace $Y\sub X$ of density $\le\omega_1$ is weakly compactly compactly generated, see section \ref{wcg}.

In the final section of this note we  give a partial negative answer to another problem from \cite{TT15}. We  show, assuming a weak version of Martin's axiom, that countable functional tightness does not reflect in small continuous images of compact spaces.

We wish to thank Vladimir V.\ Tkachuk for sending us a preliminary version of \cite{TT15} and for his suggestion to  link  up our result with some questions asked in  \cite{JT15}.
We are also grateful to Mari\'an Fabian, Witold Marciszewski and Roman Pol for several valuable comments.

\section{Preliminaries}

All the topological spaces we consider here are assumed to be Hausdorff.
Given a topological space $X$, $w(X)$ denotes its topological weight, i.e.\
the minimal size of a base in $X$. Recall that a family  $\VV$ of nonempty open subsets of $X$ is
a {\em $\pi$-base} if every nonempty open set in $X$ contains some $V\in\VV$.

Our examples will be  constructed from some Boolean algebras.
If $\alg$ is a Boolean algebra then we write $\ult(\alg)$ for its Stone space (of all ultrafilters
on $\alg$). We write $\widehat{a}=\{x\in\ult(\alg): a\in x\}$ for $a\in\alg$. Recall that
sets $\widehat{a}$ form a base for the topology on $\ult(\alg)$.

We shall use the following result which is a very particular case of the Marde\v{s}i\'c factorization theorem \cite{Ma60}.
We enclose the sketch of a direct argument.

\begin{theorem}\label{mardesic}
Let $\alg$ be any Boolean algebra. If $L$ is a continuous image of $\ult(\alg)$ and $w(L)\le\omega_1$
then there is a subalgebra $\algb\sub \alg$ such that $|\algb|\le\omega_1$ and $L$ is a continuous image
of $\ult(\algb)$.
\end{theorem}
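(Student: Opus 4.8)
The plan is to exploit Stone duality: the inclusion of any subalgebra $\algb\sub\alg$ induces the continuous restriction map $\pi\colon\ult(\alg)\lra\ult(\algb)$, $\pi(x)=x\cap\algb$, and I would factor the given surjection $f\colon\ult(\alg)\lra L$ through $\pi$ for a suitably chosen $\algb$ of size $\le\omega_1$. The guiding principle is that such a factorization $f=g\circ\pi$ exists as soon as $f$ is constant on the fibres of $\pi$, i.e.\ as soon as $x\cap\algb=y\cap\algb$ forces $f(x)=f(y)$; so the whole task reduces to placing at most $\omega_1$ elements of $\alg$ into $\algb$ so that these elements separate points of $L$ well enough.

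First I would fix a base $\BB$ of $L$ with $|\BB|\le\omega_1$. Since $L$ is compact Hausdorff, hence regular, for every pair $U,V\in\BB$ with $\ol U\cap\ol V=\emptyset$ the preimages $f^{-1}(\ol U)$ and $f^{-1}(\ol V)$ are disjoint closed subsets of the zero-dimensional compact space $\ult(\alg)$; as such they can be separated by a clopen set, i.e.\ there is $a_{U,V}\in\alg$ with $f^{-1}(\ol U)\sub\widehat{a_{U,V}}$ and $\widehat{a_{U,V}}\cap f^{-1}(\ol V)=\emptyset$ (recall that clopen subsets of $\ult(\alg)$ are exactly the sets $\widehat a$). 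I would then let $\algb$ be the subalgebra of $\alg$ generated by all the $a_{U,V}$; since there are at most $\omega_1$ admissible pairs, $|\algb|\le\omega_1$.

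It then remains to verify the factorization. Suppose $x,y\in\ult(\alg)$ satisfy $\pi(x)=\pi(y)$, i.e.\ $x$ and $y$ agree on $\algb$, and yet $f(x)=p\ne q=f(y)$. By regularity of $L$ there are $U,V\in\BB$ with $p\in U$, $q\in V$ and $\ol U\cap\ol V=\emptyset$; then $a_{U,V}\in x$ while $a_{U,V}\notin y$, contradicting agreement on $\algb$. Hence $f$ is constant on the fibres of $\pi$, so there is a well-defined map $g\colon\ult(\algb)\lra L$ with $f=g\circ\pi$. Since $\pi$ is a continuous surjection between compact Hausdorff spaces it is closed, hence a quotient map, so $g$ is continuous; and $g$ is onto because $f$ is. Thus $L$ is a continuous image of $\ult(\algb)$.

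The only real subtlety I anticipate is that one must separate the preimages of sets with \emph{disjoint closures} rather than of merely disjoint basic open sets, precisely so that the regularity of $L$ can later supply the separating pair $U,V$; this is what makes the chosen elements $a_{U,V}$ detect the inequality $f(x)\ne f(y)$. Everything else is routine: the surjectivity of $\pi$ follows because every ultrafilter on $\algb$ extends to one on $\alg$, and the cardinality bound is immediate from $|\BB|\le\omega_1$.
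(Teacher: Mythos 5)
Your proof is correct, and it follows the same overall Stone-duality scheme as the paper: find a subalgebra $\algb\sub\alg$ with $|\algb|\le\omega_1$ such that $f$ is constant on the fibres of the natural projection $\pi:\ult(\alg)\to\ult(\algb)$, then factor $f=g\circ\pi$ and get continuity of $g$ from the fact that $\pi$ is a closed, hence quotient, surjection. The only genuine difference is the mechanism for choosing the generators of $\algb$. The paper fixes a base $\UU$ of $L$ with $|\UU|\le\omega_1$ consisting of $F_\sigma$ (cozero) sets, notes that each open set $f^{-1}(U)$, $U\in\UU$, is then an open $F_\sigma$ subset of the Stone space and hence a countable union of clopen sets $\widehat a$, and takes $\algb$ generated by all such $a$; fibre-constancy follows because distinct points of $L$ are separated by some $U\in\UU$. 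You instead index generators by \emph{pairs} $U,V$ of base elements with disjoint closures and invoke strong zero-dimensionality of $\ult(\alg)$ (disjoint closed sets are separated by a clopen set) applied to $f^{-1}(\ol U)$ and $f^{-1}(\ol V)$, recovering fibre-constancy via regularity of $L$. The trade-off: your route avoids the fact, used silently in the paper, that a compact space of weight $\le\omega_1$ admits a base of $\le\omega_1$ many $F_\sigma$ sets, at the price of quadratically many (still $\le\omega_1$) generators and an appeal to regularity/normality of $L$; the paper's route needs only a base and one covering observation per base element. Both are equally elementary, and your verification of the factorization step (well-definedness, continuity via the quotient property, surjectivity) is exactly what the paper leaves implicit.
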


\begin{proof}
Note that $L$ has  a base $\UU$ of cardinality $\le\omega_1$ such that every $U\in\UU$ is $F_\sigma$.
Let $f:\ult(\alg)\to L$ be a continuous surjection. For every $U\in\UU$,  the set $f^{-1}(U)$ is of type $F_\sigma$
so it can be written as a union of countably many sets of the form $\widehat{a}$, $a\in\alg$.

It follows that there is $\algb\sub \alg$ of size at most $\omega_1$ such that, writing
$\pi:\ult(\alg)\to \ult(\algb)$ for the natural projection, we have $f(x)=f(y)$ whenever $x,y \in\ult(\alg)$ and $\pi(x)=\pi(y)$.
Hence we can write $f=f'\circ\pi$, where
$f':\ult(\algb)\to L$. It follows that $f'$  is continuous and the proof is complete.
\end{proof}

A compact space $K$ is said to be {\em Eberlein compact} if it is homeomorphic to a weakly compact subset
of some Banach space; equivalently, by the classical Amir-Lindenstrauss theorem, $K$ is Eberlein compact if it can be embedded into
\[ c_0(\kappa)=\{x\in\er^\kappa: \{\alpha: |x_\alpha|\ge\eps\}\mbox{ is finite for every } \eps>0\},\]
for some $\kappa$. Here $c_0(\kappa)$ is equipped with the topology inherited from $\er^\kappa$  (this topology agrees on bounded sets with the weak
topology of the Banach space $c_0(\kappa)$).

In particular, if $n\in\omega$ then every compact subset of
\[ \sigma_n(2^\kappa)=\{x\in 2^\kappa: |\{\alpha: |x_\alpha\neq 0\}|\le n\} ,\]
is Eberlein compact. In fact it is uniform Eberlein compact in the sense that it can be embedded as a weakly compact subspace of a Hilbert space
(note that $\sigma_n(2^\kappa)$ is a bounded subset of $l_2(\kappa)$).

A compact space $K$ is said to be {\em Corson compact} if there is $\kappa$ such that
$K$ is
homeomorphic to a subset of the $\Sigma$-product of real lines
\[ \Sigma(\er^\kappa) = \{x \in \er^\kappa: |\{\alpha : x_\alpha \neq 0\}| \le \omega\}.\]
Since $c_0(\kappa)\sub\Sigma(\er^\kappa)$, the class of Corson compacta contains (properly) the class of Eberlein compacta.
Negrepontis \cite{Ne84} and Kalenda \cite{Kal00}  offer extensive surveys on Eberlein and Corson compacta and related classes.
We only recall here that both uniform Eberlein compacta and Corson compacta  are stable   under  continuous images, see
e.g.\ \cite{Ne84}, 6.26 and \cite{Kal00}, p.\ 2.

A family $\FF$ in a Boolean algebra is said to be {\em centred} if $a_1\cap a_2\cap\ldots a_k\neq 0$ for every
natural number $k $ and every $a_i\in\FF$. We shall use the following standard fact.

\begin{lemma}\label{1:2}
For a Boolean algebra $\alg$ the following are equivalent:
\begin{enumerate}[(i)]
\item $\ult(\alg)$ is Corson compact;
\item there is a family $\GG\sub\alg$ generating $\alg$ and such that every
centred subfamily of $\GG$ is countable.
\end{enumerate}
\end{lemma}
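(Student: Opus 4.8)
The plan is to establish the two implications separately; the direction (ii) $\Rightarrow$ (i) is routine and the substance lies in (i) $\Rightarrow$ (ii).

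For (ii) $\Rightarrow$ (i) I would use the generating family $\GG$ to embed $\ult(\alg)$ into a $\Sigma$-product indexed by $\GG$. Define $\phi\colon\ult(\alg)\to\er^\GG$ by letting the $a$-th coordinate of $\phi(x)$ be $1$ if $a\in x$ and $0$ otherwise; equivalently $\phi(x)=(\chi_{\widehat a}(x))_{a\in\GG}$. Each coordinate is continuous because $\widehat a$ is clopen, so $\phi$ is continuous, and $\phi$ is injective because an ultrafilter on $\alg$ is determined by its trace on a generating family. Since $\ult(\alg)$ is compact and $\er^\GG$ is Hausdorff, $\phi$ is a homeomorphic embedding. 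Finally, for fixed $x$ the support $\{a\in\GG : \phi(x)_a\neq0\}$ equals $\GG\cap x$, which is a centred family because $x$ is a proper filter; by hypothesis it is countable. Hence $\phi(\ult(\alg))\sub\Sigma(\er^\GG)$ and $\ult(\alg)$ is Corson compact.

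For (i) $\Rightarrow$ (ii) I would start from an embedding $\ult(\alg)\hra\Sigma(\er^\kappa)$ and let $f_\alpha$ ($\alpha<\kappa$) be the coordinate functions: these are continuous, separate points, and each $x$ has $\{\alpha: f_\alpha(x)\neq0\}$ countable. The key idea is to manufacture a point-countable family of open $F_\sigma$ sets that separates points (in the sense that for distinct $x,y$ some member of the family contains exactly one of them) and then to replace each such set by countably many clopen sets. Concretely, I would take $\WW$ to consist of the sets $\{f_\alpha>q\}$ for rational $q>0$ and $\{f_\alpha<q\}$ for rational $q<0$, with $\alpha<\kappa$. Thresholding strictly away from $0$ is what forces point-countability: membership of $x$ in such a set requires $f_\alpha(x)\neq0$, so only countably many members of $\WW$ contain a given $x$. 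A short case analysis on the sign of $f_\alpha(x)-f_\alpha(y)$ shows $\WW$ separates points. Each $W\in\WW$ is open $F_\sigma$, so in the Boolean space $\ult(\alg)$ it is a countable union $W=\bigcup_m\widehat{a^W_m}$ of clopen sets (cover each closed piece by finitely many clopen subsets of $W$, using compactness and zero-dimensionality). I would then set $\GG=\{a^W_m : W\in\WW,\ m\in\omega\}$.

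It remains to verify the two properties of $\GG$, and this is where I expect the main obstacle. That every centred $\HH\sub\GG$ is countable goes smoothly: by the finite intersection property and compactness there is $x\in\bigcap_{a\in\HH}\widehat a$; each $a\in\HH$ satisfies $\widehat a\sub W$ for its defining $W$, so $x$ lies in all these sets $W$, and point-countability of $\WW$ bounds their number by $\omega$, each contributing only countably many $a^W_m$, whence $\HH$ is countable. That $\GG$ generates $\alg$ is the delicate point: I would first observe that the clopen sets $\{\widehat a: a\in\GG\}$ separate points of $\ult(\alg)$ (immediate from the separation property of $\WW$ together with $W=\bigcup_m\widehat{a^W_m}$), and then invoke the standard fact that a subalgebra of $\clop(\ult(\alg))$ separating points must be all of $\clop(\ult(\alg))$, proved by the usual compactness argument trapping an arbitrary clopen set between finite unions and finite intersections of members of the subalgebra. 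Translating back through Stone duality gives that the subalgebra generated by $\GG$ is $\alg$, completing the proof.
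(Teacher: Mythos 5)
Your proof is correct, and it differs from the paper's in an instructive way. The (ii)~$\Rightarrow$~(i) half is identical to the paper's: the evaluation map into $2^\GG$, injective because $\GG$ generates $\alg$, continuous because the sets $\widehat{a}$ are clopen, with image inside the $\Sigma$-product because the trace of any ultrafilter on $\GG$ is centred and hence countable. In the (i)~$\Rightarrow$~(ii) half you take a genuinely more self-contained route: the paper disposes of this direction in three lines by asserting (without proof) the structural fact that a zerodimensional Corson compact space embeds into $\Sigma(2^\kappa)$, and then takes as generators the coordinate clopen sets $C_\alpha=\{x\in K : x_\alpha=1\}$, whose centred subfamilies are countable by the very definition of the $\Sigma$-product; the paper's claim that the $C_\alpha$ generate the clopen algebra rests on the same Stone-duality fact you invoke, namely that a point-separating subalgebra of the clopen algebra of a compact zerodimensional space is the whole algebra. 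What you do instead amounts to proving the embedding fact the paper takes for granted: starting from an embedding into $\Sigma(\er^\kappa)$, your rational thresholds chosen strictly away from $0$ yield a point-countable, point-separating family of open $F_\sigma$ sets, and compactness plus zero-dimensionality convert it into a family of clopen sets that separates points and has all centred subfamilies countable (your chain ``centred $\Rightarrow$ common point by compactness $\Rightarrow$ countably many defining sets by point-countability'' is exactly right, as is the generation step). So your argument buys self-containedness, requiring no prior knowledge about zerodimensional Corson compacta, at the cost of length; the paper buys brevity by citing a known structural result whose standard proof is, in substance, precisely your thresholding construction.
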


\begin{proof}
$(i) \rightarrow (ii)$. Since $\ult(\alg)$ is Corson compact and zerodimensional,
$\ult(\alg)$ is homeomorphic to  a compact space $K$ contained in $\Sigma(2^\kappa)$
for some $\kappa$. The algebra of clopen subsets of $K$ is generated by
the family $\CC=\{C_\alpha: \alpha<\kappa\}$, where  $C_\alpha=\{x\in K: x_\alpha=1\}$.
Every centred subfamily of $\CC$ is  countable by the definition of $\Sigma(2^\kappa)$.

$(ii) \rightarrow (i)$. Take $f:\ult(\alg)\to 2^\GG$, where
$f(x)(G)=1$ if $G\in x$ and $=0$ otherwise. Then $f$ is continuous,
and $f[\ult(\alg)]\sub \Sigma(2^\GG)$ since
every ultrafilter on $\alg$ contains at most countably many generators from $\GG$. Moreover, $f$ is injective since $\GG$ generates $\alg$.
\end{proof}

\section{On Eberlein and  Corson compacta}

Let $\gamma$ be a limit ordinal. A set $F\sub\gamma$ is said to be {\em closed}  if it is closed in the interval topology defined on ordinals smaller that $\gamma$.
Such a set $F$ is unbounded in $\gamma$ if for every $\beta<\gamma$ there is $\alpha\in F$ such that $\beta<\alpha$. A set $S\sub \gamma$ is {\em stationary}
if $S\cap F\neq\emptyset$ for every closed and unbounded $F\sub\gamma$.

It is not difficult to check that the set $S_\omega=\{\alpha<\omega_2: \cf(\alpha)=\omega\}$ is stationary in $\omega_2$. However, such a set reflects in the sense
that, for instance, $S_\omega\cap\omega_1$ is stationary in $\omega_1$.
We shall work assuming the following.

\begin{axiom}\label{ax}
 There is a stationary set $S\sub\omega_2$ such that

\begin{enumerate}[(a)] \label{st:1}
\item $\cf(\alpha)=\omega$ for every $\alpha\in S$;
\item $S\cap\beta$ is not stationary in $\beta$ for every $\beta<\omega_2$ with $\cf(\beta)=\omega_1$.\label{st:1b}
\end{enumerate}
\end{axiom}

Note that in \ref{ax}(\ref{st:1b}) we can say that $S\cap\beta$ is not stationary in $\beta$ for every limit $\beta<\omega_2$ because if $\cf(\beta)=\omega$
then $\beta$ is a limit of a sequence of successor  ordinals.

Basic information on \ref{ax} can be found in Jech \cite{Jech}; recall that \ref{ax} follows from Jensen's principle $\square_{\omega_1}$ (\cite{Jech}, Lemma 23.6)
and hence it holds  in the constructible universe (\cite{Jech}, Theorem 27.1).
In fact one cannot  deny \ref{ax} and prove the consistency of the  the statement {\em every stationary set $S\sub\{\alpha<\omega_2: \cf(\alpha)=\omega\}$  reflects at some $\gamma<\omega_2$}
without assuming the existence of large cardinals,  see Magidor \cite{MM82} and \cite{Jech}, page 697.

\begin{xca}\label{construction}
Throughout this section we consider the space $K=\ult(\alg)$, where the Boolean algebra $\alg$ is defined as follows.

Fix a set $S\sub\omega_2$ as in \ref{ax}.
For every $\alpha\in S$ we pick an increasing sequence $(p_n(\alpha))_{n<\omega}$ of ordinals
such that $p_n(\alpha)\to \alpha$.
Put
\[A_\alpha=\{p_n(\alpha): n<\omega\},\quad
\mbox{and}\quad X=\bigcup_{\alpha\in S} A_\alpha.\]
Finally, let $\alg$ be the algebra of subsets of $X$ generated by finite subsets of $X$ together
with the family $\{A_\alpha: \alpha\in S\}$.
\end{xca}

We shall prove that $K=\ult(\alg)$ is not Corson compact because $S$ is stationary in $\omega_2$ while
the absence of stationary reflection for $S$ implies that $\ult(\algb)$ is  Eberlein compact  for every small subalgebra $\algb$ of $\alg$.

\begin{lemma}\label{st:3}
If $\alg$ is the algebra defined in  \ref{construction}
then the space $\ult(\alg)$ is not Corson compact
\end{lemma}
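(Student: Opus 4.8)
The plan is to argue by contradiction through the Boolean dual of Corson compactness supplied by Lemma \ref{1:2}. I would assume that $\ult(\alg)$ is Corson compact and fix a family $\GG\subseteq\alg$ that generates $\alg$ and all of whose centred subfamilies are countable; equivalently, every point of $\ult(\alg)$ (every ultrafilter on $\alg$) belongs to only countably many of the clopen sets $\widehat G$, $G\in\GG$. The goal is then to exhibit a single point lying in uncountably many members of $\GG$, contradicting this.

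First I would record the combinatorial skeleton of $\alg$. Since $A_\alpha\subseteq\alpha$ and each ladder converges to its index, for $\alpha<\beta$ the set $A_\beta\cap\alpha$ is finite, so the family $\{A_\alpha:\alpha\in S\}$ is automatically almost disjoint; in particular a finite union $A_{\delta_1}\cup\dots\cup A_{\delta_k}$ almost contains $A_\alpha$ (i.e.\ $A_\alpha\setminus(A_{\delta_1}\cup\dots\cup A_{\delta_k})$ is finite) only for $\alpha\in\{\delta_1,\dots,\delta_k\}$. Moreover, because $\alg$ is generated by the ladders and finite sets, every $a\in\alg$ is either \emph{small} (almost contained in a finite union of ladders) or \emph{cosmall} (with small complement); the cosmall sets form a single free ultrafilter, giving a point $w_\infty$ that lies in $\widehat a$ for every cosmall $a$. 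Finally, for each $\alpha\in S$ the clopen set $\widehat{A_\alpha}$ carries exactly one nonisolated point $u_\alpha$, whose basic neighbourhoods are the sets $\{u_\alpha\}\cup(A_\alpha\setminus F)$ with $F$ finite.

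Now the heart of the argument. Fixing $\alpha\in S$, I would write the clopen set $\widehat{A_\alpha}$ as a finite union of monomials in members of $\GG$ and pick one monomial $M_\alpha$ containing $u_\alpha$. Then $M_\alpha$ is a neighbourhood of $u_\alpha$ contained in the small set $\widehat{A_\alpha}$, and I distinguish two cases. If some positive factor $G_\alpha\in\GG$ of $M_\alpha$ is small, then $A_\alpha\subseteq^* G_\alpha$ (\emph{type I}). Otherwise the intersection of the positive factors of $M_\alpha$ is cosmall while $M_\alpha$ itself is small, so one of the negative factors, say $H_\alpha\in\GG$, must be cosmall; since $M_\alpha\cap H_\alpha=\emptyset$ and $u_\alpha\in M_\alpha$, this gives $A_\alpha\subseteq^* X\setminus H_\alpha$ with $X\setminus H_\alpha$ small (\emph{type II}). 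As $S$ is stationary it cannot be split into two nonstationary pieces, so one of the two types occurs on a stationary set $S'\subseteq S$.

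The contradiction is then drawn in each case, and this is where stationarity is indispensable. In type II the small sets $X\setminus H_\alpha$ almost contain the pairwise almost disjoint ladders $A_\alpha$, so by the remark above a single cosmall $H$ can equal $H_\alpha$ for only finitely many $\alpha$; hence $\{H_\alpha:\alpha\in S'\}$ consists of uncountably many distinct cosmall generators, all containing $w_\infty$, so $w_\infty$ already meets $\GG$ uncountably. In type I each small $G_\alpha$ almost contains $A_\alpha$, say $G_\alpha\supseteq A_\alpha\setminus F_\alpha$ with $F_\alpha\subseteq\alpha$ finite; the map $\alpha\mapsto\min(A_\alpha\setminus F_\alpha)$ is regressive on the stationary $S'$, so by Fodor's pressing‑down lemma it is constant, equal to some $x^*\in X$, on a stationary $S''\subseteq S'$. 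Then $x^*\in G_\alpha$ for every $\alpha\in S''$, and again almost disjointness forbids a single small generator from almost containing more than finitely many $A_\alpha$, so $\{G_\alpha:\alpha\in S''\}$ is an uncountable set of distinct members of $\GG$ all containing $x^*$. Either way some point lies in an uncountable centred subfamily of $\GG$, contradicting the choice of $\GG$. The hard part is exactly the case split forced by the freedom in $\GG$: one must pin down, for each $\alpha$, a generator that ``sees the ladder $A_\alpha$'' and is specific enough to $\alpha$ that almost disjointness keeps the chosen generators distinct—after which Fodor's lemma is what converts the stationarity of $S$ into a single common point $x^*$.
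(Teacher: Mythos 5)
Your proof is correct and takes essentially the same route as the paper's: both pass through Lemma \ref{1:2}, use the small/co-small dichotomy in $\alg$, assign to each $\alpha\in S$ a generator that captures $A_\alpha$ (an assignment that is finite-to-one by almost disjointness of the ladders), and apply the pressing-down lemma to produce a single point lying in $\omega_2$-many generators, contradicting countability of centred subfamilies. Your Type~I/Type~II case split is just a reorganization of the paper's opening normalization step, in which the co-countable (cosmall) generators are observed to form a centred, hence countable, family and are replaced by their complements.
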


\begin{proof}
Suppose that there is a family $\GG\sub \alg$ as in Lemma \ref{1:2}(ii).
Note that every $A\in\alg$ is either countable or co-countable in $X$.
The family $\GG_0=\{G\in\GG: |X\sm G|\le\omega\}$ is centred so it is at most countable.  Hence, replacing every
$G\in\GG_0$ by its complement, we may assume that every $G\in\GG$ is countable.

Let $\GG_1=\{G\in\GG:|G|=\omega\}$. Note that every $G\in\GG_1$ is, modulo a  finite set,
a finite union of sets $A_\alpha$.

For every $\alpha\in S$ there must be $G_\alpha\in\GG_1$ such that $|A_\alpha\cap G_\alpha|=\omega$.
Indeed, otherwise $A_\alpha$ would be almost disjoint from every $G\in\GG_1$ so would not be in
the algebra generated by $\GG$.
Note that the function $\alpha\to G_\alpha$ is finite-to-one.

It follows that for every $\alpha\in S$ there is $\vf(\alpha)<\alpha$ such that $\vf(\alpha)\in G_\alpha$. By the pressing-down lemma, there is $\xi$ such that the set $\{\alpha\in S: \vf(\alpha)=\xi\}$ is stationary. It follows that $\{G\in\GG_1: \xi\in G\}$ is of cardinality $\omega_2$, and this is a contradiction.
\end{proof}

The second part of the argument is based on the following auxiliary result which is stated in a slightly stronger form suitable for
inductive argument.

\begin{lemma}\label{st:2}
For every  $\beta,\gamma$ such that $\beta<\gamma<\omega_2$ there is a family
\[\BB(\beta,\gamma)=\{B_\alpha: \alpha\in S\cap (\beta,\gamma) \}, \]
such that

\begin{enumerate}[(i)]
\item $B_\alpha\sub A_\alpha\sm\beta$ and $|A_\alpha\sm B_\alpha|<\omega$ for every $\alpha\in S\cap (\beta,\gamma)$;\label{w1}
\item $B_\alpha\cap B_{\alpha'}=\emptyset$ whenever $\alpha, \alpha'\in S\cap (\beta,\gamma)$ and $\alpha\neq\alpha'$.  \label{w2}
\end{enumerate}

\end{lemma}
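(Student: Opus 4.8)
The plan is to argue by transfinite induction on $\gamma<\omega_2$, proving the statement for all $\beta<\gamma$ simultaneously. The whole point of the clause $B_\alpha\sub A_\alpha\sm\beta$ in (\ref{w1}) — the feature that makes the formulation ``slightly stronger'' — is that it reduces the inductive gluing to a triviality. Concretely, suppose I have already disjointified the ladders on two consecutive intervals, obtaining $\BB(\beta',\gamma')$ and $\BB(\gamma',\gamma'')$, where $\gamma'\notin S$. For $\alpha\in S\cap(\beta',\gamma')$ the hypothesis (\ref{w1}) gives $B_\alpha\sub A_\alpha\sm\beta'\sub[\beta',\gamma')$, since $A_\alpha\sub[0,\alpha)\sub[0,\gamma')$; for $\alpha\in S\cap(\gamma',\gamma'')$ it gives $B_\alpha\sub A_\alpha\sm\gamma'\sub[\gamma',\gamma'')$. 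These two blocks of sets thus live in disjoint intervals of ordinals, so cross-interval disjointness is automatic and only the within-interval condition (\ref{w2}) is supplied by the inductive hypothesis. The same observation works verbatim for a partition into countably many, or $\omega_1$-many, consecutive intervals whose endpoints avoid $S$, and this ``gluing principle'' is what I will invoke in every case.

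Then I distinguish cases on $\gamma$. If $\gamma=\delta+1$ is a successor, I take $\BB(\beta,\delta)$ from the inductive hypothesis; if $\delta\notin S$ nothing new appears and I am done, while if $\delta\in S$ I add the top ladder \emph{with priority}: I put $B_\delta=A_\delta\sm\beta$ and replace each $B_\alpha$ of $\BB(\beta,\delta)$ by $B_\alpha\sm A_\delta$. Here the key elementary fact is that $A_\alpha\cap A_\delta$ is finite for every $\alpha<\delta$ (because $A_\alpha\sub[0,\alpha)$ while only finitely many ladder points of $\delta$ lie below $\alpha<\delta$); hence each $B_\alpha$ loses only finitely many points, (\ref{w1}) is preserved, and $B_\delta$ is by construction disjoint from every trimmed $B_\alpha$. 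If $\gamma$ is a limit with $\cf(\gamma)=\omega$, I choose successor ordinals $\gamma_n\uparrow\gamma$ with $\gamma_0>\beta$ (possible since an ordinal of countable cofinality is a limit of successors), note that $\gamma_n\notin S$, so that $S\cap(\beta,\gamma_0)$ together with the sets $S\cap(\gamma_n,\gamma_{n+1})$ partition $S\cap(\beta,\gamma)$, apply the inductive hypothesis to each piece, and glue.

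The remaining case, $\gamma$ a limit with $\cf(\gamma)=\omega_1$, is where Axiom \ref{ax}(\ref{st:1b}) is indispensable: since $\cf(\gamma)=\omega_1$, the set $S\cap\gamma$ is non-stationary in $\gamma$, so I fix a club $C\sub(\beta,\gamma)$ with $C\cap S=\emptyset$ and enumerate it continuously and increasingly as $\{c_\xi\}$ with $c_0>\beta$. Because every $c_\xi\notin S$ — including the limit points $c_\xi=\sup_{\eta<\xi}c_\eta$, which lie in $C$ — the intervals $(\beta,c_0)$ and $(c_\xi,c_{\xi+1})$ partition $S\cap(\beta,\gamma)$, and each has right endpoint strictly below $\gamma$; applying the inductive hypothesis on each interval and gluing finishes the induction.

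The main obstacle is exactly this last step: it is only non-reflection that supplies a club avoiding $S$ and thereby lets me cut $(\beta,\gamma)$ into blocks bounded below $\gamma$ to which induction applies. Lemma \ref{st:3} shows that the disjointification must fail globally when $S$ is stationary in $\omega_2$, so no argument of this kind can dispense with hypothesis (\ref{st:1b}). The successor ``priority'' manoeuvre and the verification that the $\sm\beta$ clause forces cross-interval disjointness are the other points requiring care, but both are routine once the gluing principle is set up; the only genuinely structural input is the choice of the club in the $\cf(\gamma)=\omega_1$ case.
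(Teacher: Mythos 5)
Your proof is correct and follows essentially the same route as the paper's: induction on $\gamma$, with the successor step handled by giving the top ladder $A_\delta\setminus\beta$ priority and trimming the existing $B_\alpha$'s by their finite intersections with $A_\delta$, and the limit step handled by a club disjoint from $S$ whose complementary intervals let the inductive families be glued, cross-interval disjointness being automatic from the $B_\alpha\subseteq A_\alpha\setminus\beta$ clause. The only cosmetic difference is that you split the limit case by cofinality, while the paper treats both cofinalities uniformly via its remark following Axiom \ref{ax} that $S\cap\gamma$ is non-stationary for every limit $\gamma<\omega_2$.
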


\begin{proof}
We prove the assertion by induction on $\gamma$.

For the successor step $\gamma \to\gamma+1$ there is nothing to prove  in case $\gamma\notin S$.
Suppose $\gamma\in S$ and take $\BB(\beta,\gamma)$ satisfying (\ref{w1}) and (\ref{w2}).
Then $B_\alpha\cap A_\gamma$ is finite for every $\alpha<\gamma$, $\alpha\in S$ and therefore
\[\{B_\alpha\sm A_{\gamma}: \alpha\in (\beta,\gamma)\cap S\}\cup\{A_{\gamma}\sm\beta\},\]
is the required family for the interval $(\beta,\gamma)$.

Suppose that $\gamma$ is a limit ordinal. Then $S\cap\gamma$ is not stationary in $\gamma$ so there
is a closed unbounded set $C\sub \gamma$ such that $C\cap S=\emptyset$. In other words,
$S\cap\gamma$ is contained in a set $\gamma\sm C$ which is open and hence is a union of disjoint subintervals.

Fix $\beta<\gamma$.
If $\xi, \eta\in C$, $\beta<\xi<\eta$ and $(\xi,\eta)\cap C=\emptyset$ then we can apply the inductive assumption to $S\cap (\xi,\eta)$ and get the required family $\BB(\xi,\eta)$.
The union of families obtained in this way is clearly the family that satisfies (\ref{w1}) and (\ref{w2}).
\end{proof}

\begin{lemma}\label{st:4}
For every algebra $\algb\sub \alg$, where $\alg$ is as in \ref{construction}, if
 $|\algb|\le\omega_1$ then the space $\ult(\algb)$ is uniform Eberlein compact.
\end{lemma}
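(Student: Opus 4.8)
The plan is to pass from $\algb$ to a slightly larger but more tractable subalgebra and to write down an explicit embedding of its Stone space into a $\sigma_2(2^\kappa)$. First I would reduce the problem: it suffices to find a subalgebra $\algc$ with $\algb\sub\algc\sub\alg$, $|\algc|\le\omega_1$, for which $\ult(\algc)$ is uniform Eberlein compact. Indeed, the inclusion $\algb\sub\algc$ dualizes to a continuous surjection $\ult(\algc)\to\ult(\algb)$ (every ultrafilter on $\algb$ extends to $\algc$, and preimages of basic clopen sets are clopen), and uniform Eberlein compactness is preserved under continuous images.

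To build $\algc$ I would fix, for each $a\in\algb$, a representation as a Boolean combination of finitely many generators of $\alg$ (sets $A_\alpha$ and singletons) and collect every generator used. This produces a set $T\sub S$ and a set $P\sub X$, both of size $\le\omega_1$, so that the subalgebra $\algc$ generated by $\{A_\alpha:\alpha\in T\}$ together with all finite subsets of $Z:=P\cup\bigcup_{\alpha\in T}A_\alpha$ satisfies $\algb\sub\algc$ and $|\algc|\le\omega_1$ (note $|Z|\le\omega_1$, so $Z$ has only $\omega_1$ finite subsets).

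The decisive step exploits that $\cf(\omega_2)=\omega_2>\omega_1$, so $T$ is bounded: I would choose $\gamma<\omega_2$ with $T\sub\gamma$ and apply Lemma \ref{st:2} on the interval $(0,\gamma)$ to obtain pairwise disjoint sets $B_\alpha\sub A_\alpha$ with $A_\alpha\sm B_\alpha$ finite. Since each $A_\alpha\sm B_\alpha$ is then a finite subset of $Z$, it lies in $\algc$, and hence $\algc$ is equally generated by the \emph{pairwise disjoint} family $\{B_\alpha:\alpha\in T\}$ together with the finite subsets of $Z$. This disjointification is the heart of the matter and the step I expect to be the main obstacle: the sets $A_\alpha$ themselves are far from disjoint (a single point of $X$ may lie in infinitely many ladders), so an ultrafilter could contain infinitely many of them and the naive coordinate map would miss every $\sigma_n$; Lemma \ref{st:2} is precisely what repairs this, and the boundedness of $T$ is what makes it applicable.

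With disjointness secured, I would define $f\colon\ult(\algc)\to 2^{Z}\times 2^{T}\cong 2^{Z\sqcup T}$ by letting the $z$-coordinate of $f(u)$ equal $1$ iff $\{z\}\in u$ and the $\alpha$-coordinate equal $1$ iff $B_\alpha\in u$. Each coordinate is the characteristic function of a clopen set, so $f$ is continuous; since the singletons and the sets $B_\alpha$ generate $\algc$, any two distinct ultrafilters are separated by $f$, so $f$ is injective and hence an embedding of the compact space $\ult(\algc)$. Because no ultrafilter contains two disjoint members, at most one singleton and at most one $B_\alpha$ belong to $u$, so $f[\ult(\algc)]\sub\sigma_2(2^{Z\sqcup T})$, which is uniform Eberlein compact. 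Thus $\ult(\algc)$, and with it its continuous image $\ult(\algb)$, is uniform Eberlein compact. The only routine points left are the bookkeeping that $\algb\sub\algc$ and that $A_\alpha\sm B_\alpha\sub Z$ so that the $B_\alpha$ indeed lie in $\algc$.
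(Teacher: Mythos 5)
Your proposal is correct and takes essentially the same route as the paper: both arguments pass to a larger subalgebra whose ladder-generators are bounded below some $\gamma<\omega_2$ (using that $|\algb|\le\omega_1<\cf(\omega_2)$), disjointify the sets $A_\alpha$ via Lemma \ref{st:2}, embed the Stone space of that larger algebra into $\sigma_2(2^\kappa)$ exactly as in Lemma \ref{1:2}, and finish by stability of uniform Eberlein compacta under continuous images. The only cosmetic difference is that the paper's intermediate algebra $\algb_\gamma$ is generated by \emph{all} finite subsets of $X$ together with $\{A_\alpha:\alpha<\gamma\}$ (so it has size $\omega_2$), whereas your $\algc$ is trimmed to size $\le\omega_1$; this changes nothing in the argument.
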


\begin{proof}
Let $\gamma<\omega_2$ and let $\algb_\gamma$ be a subalgebra of $\alg$ generated by
all finite sets in $X$ and the family $\{A_\alpha:\alpha<\gamma\}$. It follows directly from
Lemma \ref{st:2} that $\algb_\gamma$ has a generating family $\GG$ such that there are no three
different elements in $\GG$ having nonempty intersection. Then the space
$\ult(\algb_\gamma)$ can be embedded into $\sigma_2(2^\gamma)$ (as in  Lemma \ref{1:2}) so
it is uniform Eberlein compact.

Now every subalgebra $\alg\sub\alg$ of size $\le\omega_1$ is included in $\algb_\gamma$
for some $\gamma<\omega_2$. Hence $\ult(\algb)$ is a continuous image of $\ult(\algb_\gamma)$
and thus it is uniform Eberlein compact as well.
\end{proof}

The following answers simultaneously, subject to our set-theoretic assumption ,
Questions 4 and 5 in \cite{TT15}.

\begin{theorem}\label{st:5}
Assume \ref{ax}. There is a scattered compact space $K$ with the third derivative empty such that

\begin{enumerate}[(i)]
\item $K$ is not Corson compact (in fact it is not $\omega_2$-Corson compact in the sense
of \cite{Kal00});
\item If $L$ is a continuous image of $K$ and $w(L)\le\omega_1$ then $L$ is uniform Eberlein compact.
\end{enumerate}

\end{theorem}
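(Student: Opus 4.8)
The plan is to take $K=\ult(\alg)$ with $\alg$ as in Construction \ref{construction} and to harvest parts (i) and (ii) from the lemmas already proved, reserving the real work for the scatteredness.

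For (i) I would invoke Lemma \ref{st:3}, which already gives that $K$ is not Corson compact. To get the stronger conclusion that $K$ is not $\omega_2$-Corson, I would note that the proof of Lemma \ref{st:3} in fact does more: using the evident analogue of Lemma \ref{1:2} in which ``every centred subfamily of $\GG$ is countable'' is weakened to ``every centred subfamily of $\GG$ has cardinality $\le\omega_1$'', the same pressing-down argument produces the family $\{G_\alpha:\vf(\alpha)=\xi\}$, all of whose members contain the fixed point $\xi$ and which is therefore centred; since $\alpha\mapsto G_\alpha$ is finite-to-one on a stationary, hence size-$\omega_2$, set, this centred family has cardinality $\omega_2>\omega_1$, contradicting $\omega_2$-Corsonness. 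No change to the combinatorics is needed.

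For (ii), let $L$ be a continuous image of $K$ with $w(L)\le\omega_1$. By the Marde\v{s}i\'c factorization (Theorem \ref{mardesic}) there is a subalgebra $\algb\sub\alg$ with $|\algb|\le\omega_1$ such that $L$ is a continuous image of $\ult(\algb)$. By Lemma \ref{st:4} the space $\ult(\algb)$ is uniform Eberlein compact, and since uniform Eberlein compactness is preserved under continuous images (Preliminaries), $L$ is uniform Eberlein compact as well.

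The main obstacle is the scatteredness with $K'''=\emptyset$; I expect this to require a direct computation of the Cantor--Bendixson derivatives of $\ult(\alg)$, since the small pieces $\ult(\algb_\gamma)$ embed into $\sigma_2(2^\gamma)$ but $K$ itself need not embed into any $\sigma_2(2^\kappa)$ (a single isolated point $p_n(\alpha)$ may lie in $\widehat{A_\beta}$ for uncountably many $\beta$). The linchpin is the almost-disjointness of the ladders: since $A_\alpha\sub\alpha$ while $A_{\alpha'}$ increases to $\alpha'$, any proper initial segment of $\alpha'$ meets $A_{\alpha'}$ in a finite set, so $A_\alpha\cap A_{\alpha'}$ is finite whenever $\alpha\neq\alpha'$. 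Hence a free ultrafilter on $\alg$ can contain at most one $A_\alpha$ (two would force their finite intersection into it). This lets me classify the points of $K$: the isolated points are exactly the principal ultrafilters at the points of $X$ (each $\{x\}\in\alg$); a free ultrafilter containing some $A_\alpha$ meets $A_\alpha$ in the finite--cofinite trace of $\alg$ and so is the unique limit point $x_\alpha$ of $(p_n(\alpha))_n$; and the co-countable sets form the one remaining free ultrafilter $\infty$. Thus $K'=\{x_\alpha:\alpha\in S\}\cup\{\infty\}$. I would then check that each $x_\alpha$ is isolated in $K'$, using that $\widehat{A_\alpha}$ contains $x_\alpha$ but, by almost-disjointness, no other $x_{\alpha'}$ and not $\infty$, whereas $\infty$ lies in the $K'$-closure of $\{x_\alpha\}$: every basic neighbourhood $\widehat{c}$ of $\infty$ with $c=X\sm d$ co-countable contains $x_\alpha$ for all but the finitely many $\alpha$ appearing in the representation of the countable set $d$ as a finite union of ladders modulo a finite set (as in the proof of Lemma \ref{st:3}). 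Hence $K''=\{\infty\}$ and $K'''=\emptyset$, so $K$ is scattered of height three.
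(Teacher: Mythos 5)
Your proposal is correct and follows essentially the same route as the paper's own proof: the same space $K=\ult(\alg)$, with (i) obtained from Lemma \ref{st:3}, (ii) from Theorem \ref{mardesic}, Lemma \ref{st:4} and the stability of uniform Eberlein compacta under continuous images, and $K^{(3)}=\emptyset$ via the same classification of ultrafilters (the principal ones, the unique limit $x_\alpha$ of each ladder $A_\alpha$, and the single point $\infty$ determined by the co-countable sets). You in fact supply details the paper treats as clear --- the almost-disjointness computation behind the derivative calculation, and the observation that the pressing-down argument of Lemma \ref{st:3} yields a centred family of size $\omega_2$ and hence rules out $\omega_2$-Corsonness as well --- and both check out.
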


\begin{proof}
We take $K=\ult(\alg)$, where $\alg$ is the algebra defined above in \ref{construction}.
Since $K$ is a Stone space of an algebra generated by an almost disjoint family, it is clear that
$K^{(3)}=\emptyset$. Indeed, every ultrafilter $x\in\ult(\alg)$ is either  principal or there is a unique $\alpha$ such that
$A_\alpha\in X$ or else $x\in K^{(2)}$ is the unique ultrafilter containing all $X\sm A_\alpha$.

Then $K$ is not Corson compact by Lemma \ref{st:3}. If $L$ is a continuous images of $K$
and $w(L)\le\omega_1$ then $L$ is uniform Eberlein compact by Theorem \ref{mardesic}, Lemma \ref{st:4} and
the fact that uniform Eberlein compacta are stable under continuous images.
\end{proof}

\begin{remark} In connection with Theorem \ref{st:5}, it is worth remarking that since the space $K$ used in its proof is scattered, every continuous image of
$K$ is zerodimensional so in fact the use of \ref{mardesic} is not essential. Moreover, the following results are closely related to the final conclusion on
continuous images of $K$ having  small weight.

\begin{enumerate}
\item Alster \cite{Al79} proved that every scattered Corson compact space is Eberlein compact.
\item Bell and Marciszewski \cite{BM07} proved that a scattered Eberlein compact space of height at most
$\omega+1$ is uniform Eberlein compact.
\end{enumerate}
\end{remark}

As we mentioned in the introduction, the space $K$ from Theorem \ref{st:5} settles in the negative some reflection problems of
type \ref{r1}.

\begin{lemma}\label{st:6}
Let (P) be a property of compact space that is stable under taking closed subspaces. If $K$ is a compact space and all continuous images
of weight $\le \omega_1$ have property (P) then all closed subsets $L$ of $K$ of cardinality $\le\omega_1$ have property (P).
\end{lemma}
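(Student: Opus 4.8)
The plan is to exhibit a single continuous image $M$ of $K$ with $w(M)\le\omega_1$ that contains a homeomorphic copy of the given closed set $L$, and then to transfer property (P) from $M$ to that copy by closed-subspace stability. The whole argument is a soft separation argument; the only thing to watch is the cardinality bookkeeping that keeps the weight down to $\omega_1$.

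First I would fix a closed $L\sub K$ with $|L|\le\omega_1$ (if $L$ is finite the conclusion is trivial, so assume it is infinite). There are then at most $|L|^2=\omega_1$ pairs of distinct points of $L$. Since $K$ is compact Hausdorff, it is Tychonoff, so for each such pair $x\neq y$ I can pick a continuous function $g_{x,y}\colon K\to[0,1]$ with $g_{x,y}(x)\neq g_{x,y}(y)$. Enumerating these gives a family $\{g_i:i<\omega_1\}$ of continuous real-valued functions on $K$ that separates the points of $L$.

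Next I would form the diagonal map $f=(g_i)_{i<\omega_1}\colon K\to[0,1]^{\omega_1}$ and set $M=f[K]$. Then $f$ is continuous, $M$ is a continuous image of $K$, and $w(M)\le w([0,1]^{\omega_1})=\omega_1$, so $M$ has property (P) by the hypothesis of the lemma. The restriction $f\uhr L$ is injective by the choice of the $g_i$, hence it is a continuous bijection from the compact space $L$ onto the Hausdorff space $f[L]$, and therefore a homeomorphism; moreover $f[L]$ is compact and so closed in $M$.

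Finally, since $f[L]$ is a closed subspace of $M$ and (P) is preserved by passing to closed subspaces, $f[L]$ has (P); and as $L$ is homeomorphic to $f[L]$, the set $L$ has (P) as well. The only step requiring any care is the estimate $w(M)\le\omega_1$, which rests on counting the separating functions; the rest uses only the standard fact that a continuous bijection out of a compact space into a Hausdorff space is a homeomorphism. I do not expect a genuine obstacle here.
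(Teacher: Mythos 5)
Your proof is correct and follows essentially the same route as the paper: choose $\le\omega_1$ continuous functions separating the points of $L$, take the diagonal map into $[0,1]^{\omega_1}$, note the image has weight $\le\omega_1$ and hence property (P), and transfer (P) back to $L$ via the homeomorphic closed copy $f[L]$. The only cosmetic difference is that you spell out the compact-to-Hausdorff bijection argument and the pair-counting, which the paper leaves implicit.
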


\begin{proof}
Take a closed subspace $L\sub K$ with $|L|\le\omega_1$. Then there is a family $\cF$ of continuous functions $K\to [0,1]$ such that
$|\cF|\le\omega_1$ and $\cF$ separates  the  points of $L$. Let  $g:K\to [0,1]^\cF$ be the diagonal mapping, i.e.\  $g(x)(f)=f(x)$ for $f\in\cF$.
Then $\widetilde K =g[K]\sub [0,1]^\cF$ so $w(\widetilde K)\le |\cF|\le\omega_1$ and hence $\widetilde K$ has property (P). It follows that
$\widetilde L =g[L] \sub\widetilde K$ also has property (P), and $\widetilde L$ is homeomorphic to $L$.
\end{proof}

\begin{corollary}\label{st:7}
Assume \ref{ax} and take the space $K$ as in Theorem \ref{st:5}. Then $K$ is not Corson compact while for every
$Y\sub K$, if $|Y|\le\omega_1$ then $\ol{Y}$ is uniform Eberlein compact.
\end{corollary}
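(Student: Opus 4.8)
The first assertion is immediate: $K$ is not Corson compact by Theorem \ref{st:5}(i) (equivalently Lemma \ref{st:3}). For the second assertion the plan is to reduce everything to Lemma \ref{st:6}. Recall that uniform Eberlein compactness is stable under closed subspaces (a closed subset of a weakly compact subset of a Hilbert space is again such), so taking (P) to be ``uniform Eberlein compact'', Lemma \ref{st:6} combined with Theorem \ref{st:5}(ii) shows that \emph{every closed subset of $K$ of cardinality $\le\omega_1$ is uniform Eberlein}. Hence the whole corollary comes down to the \emph{a priori} nonobvious fact that closures of small sets in $K$ stay small, i.e. $|\ol Y|\le\omega_1$ whenever $Y\sub K$ and $|Y|\le\omega_1$.

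To prove this I would first invoke the description of $\ult(\alg)$ obtained in the proof of Theorem \ref{st:5}: every point of $K$ is either a principal ultrafilter (a point of $X$, isolated in $K$), or one of the limit points $x_\alpha$ ($\alpha\in S$) determined by $A_\alpha$, or the single point of $K^{(2)}$. Since the family $\{A_\alpha:\alpha\in S\}$ is almost disjoint (two ladders converging to distinct ordinals share only finitely many terms), a basic neighbourhood of $x_\alpha$ is $\widehat a$ with $a\cap A_\alpha$ cofinite in $A_\alpha$, and such a neighbourhood contains neither any $x_\beta$ with $\beta\ne\alpha$ nor the top point. Consequently a limit point $x_\alpha$ can belong to $\ol Y$ only if either $x_\alpha\in Y$, or $Y$ meets $X$ in an infinite subset of $A_\alpha$.

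The crucial step is then a boundedness argument. Let $D\sub\omega_2$ collect the ordinals ``named'' by $Y$, namely the points of $Y\cap X$ together with the indices $\alpha$ for which $x_\alpha\in Y$; then $|D|\le|Y|\le\omega_1$. Because $\cf(\omega_2)=\omega_2>\omega_1$, the set $D$ is bounded, say $D\sub\delta$ with $\delta<\omega_2$. Now I would exploit the defining property of the ladder system: since $A_\alpha$ is an increasing sequence converging to $\alpha$, for every $\alpha>\delta$ the intersection $A_\alpha\cap\delta$ is finite, so $Y\cap X\cap A_\alpha\sub A_\alpha\cap\delta$ is finite as well. By the previous paragraph this forces $x_\alpha\notin\ol Y$ for all $\alpha>\delta$. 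Thus every point of $\ol Y\sm Y$ is either the single top point or some $x_\alpha$ with $\alpha\le\delta$, and there are at most $|\delta|\le\omega_1$ such indices; hence $|\ol Y|\le\omega_1$.

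With $|\ol Y|\le\omega_1$ in hand, Lemma \ref{st:6} applies to the closed set $\ol Y$ and yields that $\ol Y$ is uniform Eberlein compact, finishing the proof. The only delicate point is the cardinality bound $|\ol Y|\le\omega_1$; I expect this to be the main obstacle, and it rests entirely on combining the regularity of $\omega_2$ (which forces any $Y$ of size $\le\omega_1$ to be bounded below $\omega_2$) with the fact that each ladder $A_\alpha$ eventually leaves every fixed initial segment, so closures can only acquire limit points indexed below the bound.
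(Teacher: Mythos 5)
Your proposal is correct and follows essentially the same route as the paper: both arguments reduce the corollary to Lemma \ref{st:6} (applied with (P) $=$ uniform Eberlein compactness, which is indeed closed-hereditary) by establishing the key cardinality bound $|\ol{Y}|\le\omega_1$ for every $Y\sub K$ with $|Y|\le\omega_1$. Your boundedness-below-$\omega_2$ and almost-disjointness argument simply spells out the details that the paper compresses into ``$Y\sub X\cap\gamma$ for some $\gamma<\omega_2$ and this easily implies $|\ol{Y}|\le\omega_1$'' together with $\ol{Y}\sub Y\cup\{\infty\}$ for $Y\sub K^{(1)}$.
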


\begin{proof}
Recall that $X$ was defined as the union of all the sets $A_\alpha$, $\alpha\in S$.
If $Y\sub X$ and $|Y|\le\omega_1$ then $Y\sub X\cap\gamma$ for some $\gamma<\omega_2$ and this easily implies that $|\ol{Y}|\le\omega_1$.
If $Y\sub K^{(1)}$  then $\ol{Y}\sub Y\cup \{\infty\}$, where $\infty$ is the only point in $K^{(2)}$.

We conclude that  $|\ol{Y}|\le\omega_1$ for every $Y\sub K$ with $|Y|\le\omega_1$ and
the assertion follows from Lemma \ref{st:6}.
\end{proof}

 The corollary above gives partial negative answers to problems posed by Jard\'on and Tkachuk (\cite{JT15}, Questions 4.13-15) if we
 assume the continuum hypothesis together with \ref{ax}, so for instance if we are in the constructible universe.

\section{On WCG Banach spaces}\label{wcg}

A Banach space $\fX$ is said to be {\em weakly compactly generated} (WCG) if there is a weakly compact set $C\sub \fX$ such that
$\fX=\overline{\rm span}(C)$, see  Negrepontis \cite{Ne84} for a survey on WCG Banach spaces and further references.
 
Given a compact space $K$,   $C(K)$ denotes the Banach space of real-valued continuous
functions on $K$ (equipped with the usual supremum norm). By a classical result due to Amir and Lindenstrauss,
there is a natural duality between the class of WCG Banach spaces and the class of Eberlein compacta.
In particular, the following holds (see e.g.\ \cite{Ne84}, Theorem 6.9).

\begin{theorem} \label{wcg:0}
Given a compact space $K$, the Banach space $C(K)$ is WCG if and only if $K$ is Eberlein compact.
\end{theorem}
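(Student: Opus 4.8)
The plan is to prove the two implications separately. For the direction that I expect to be routine, namely that WCG-ness of $C(K)$ forces $K$ to be Eberlein, I would pass to the dual: the point-evaluation map $\delta\colon K\to (B_{C(K)^*},w^*)$, $\delta_t(g)=g(t)$, is a homeomorphism of $K$ onto a compact subset of the dual ball. By the general duality theorem of Amir and Lindenstrauss (a Banach space is WCG if and only if its dual unit ball is weak$^*$-Eberlein), the hypothesis that $C(K)$ is WCG makes $(B_{C(K)^*},w^*)$ Eberlein compact. Since Eberlein compactness is inherited by compact subspaces — a compact subset of a space embedded in some $c_0(\Gamma)$ is again a subset of $c_0(\Gamma)$, hence Eberlein by the embedding characterization recalled in the preliminaries — the copy $\delta[K]\cong K$ is Eberlein compact.

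The substantial direction is the converse: $K$ Eberlein $\Rightarrow C(K)$ WCG. First I would fix a homeomorphic embedding of $K$ onto a subset of $c_0(\Gamma)$ normalised so that $\|x\|_\infty\le 1$ for all $x\in K$, and let $g_\gamma\in C(K)$ denote the coordinate function $g_\gamma(x)=x(\gamma)$. The point is that these functions separate the points of $K$ but their closed linear span need not be all of $C(K)$; to remedy this I would adjoin all their finite products and set
\[
 W=\{0\}\cup\{1\}\cup\bigcup_{n\ge 1} 2^{-n}\bigl\{\,g_{\gamma_1}\cdots g_{\gamma_n}:\gamma_1,\dots,\gamma_n\in\Gamma\,\bigr\}.
\]
The first task is to show that $W$ is relatively weakly compact, for which I would invoke Grothendieck's criterion: a bounded subset of $C(K)$ is relatively weakly compact precisely when every sequence from it has a pointwise cluster point that is again continuous. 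Here $W$ is bounded, and given a sequence in $W$ one passes to a subsequence and inspects the pointwise limit: if the product length $n$ stays bounded the limit is either one of the listed functions or $0$ (using that each $x\in K$ lies in $c_0(\Gamma)$, so $|x(\gamma_1)\cdots x(\gamma_n)|\ge\eps$ can hold only for multi-indices drawn from the finite set $\{\gamma:|x(\gamma)|\ge\eps^{1/n}\}$), while if $n\to\infty$ the factor $2^{-n}$ forces the limit to be $0$. In every case the pointwise limit is continuous, so $W$ is relatively weakly compact and its weak closure is a weakly compact set.

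It then remains to see that $\overline{\mathrm{span}}\,W=C(K)$. Since $\mathrm{span}\,W$ contains the constant function $1$ together with every finite product $g_{\gamma_1}\cdots g_{\gamma_n}$, it is exactly the unital subalgebra of $C(K)$ generated by the family $\{g_\gamma:\gamma\in\Gamma\}$. This family separates the points of $K$, because distinct points of $K\subseteq c_0(\Gamma)$ differ in some coordinate; hence by the Stone--Weierstrass theorem the subalgebra is dense, and $C(K)$ is WCG.

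The hard part, and the one I would be most careful about, is the weak-compactness verification in the second implication. The naive candidate $\{g_\gamma\}\cup\{0\}$ is painlessly weakly compact, but its closed linear span is in general a proper subspace; enlarging it to the algebra via the products is what makes the closed span all of $C(K)$, and the price is that one must recheck weak compactness for this much larger set. The scaling $2^{-n}$ and the angelic behaviour of $C(K)$ in the pointwise topology (which underlies Grothendieck's theorem and lets one argue with sequences rather than nets) are precisely the devices that keep this under control.
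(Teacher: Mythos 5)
The paper does not actually prove this theorem: it is quoted as the classical Amir--Lindenstrauss duality, with a pointer to \cite{Ne84}, Theorem 6.9, so there is no internal argument to compare yours against. Your proposal is a correct proof of that classical result, and it is essentially the standard textbook one. Two points deserve attention. First, in the direction that $C(K)$ WCG implies $K$ Eberlein, your use of the evaluation embedding $\delta\colon K\to (B_{C(K)^*},w^*)$ is fine, but you should state the Amir--Lindenstrauss theorem as an implication rather than an equivalence: WCG implies that the dual ball is weak$^*$-Eberlein, while the converse is false in general (by Rosenthal's example there exist non-WCG subspaces of WCG spaces, and the dual ball of a subspace, being a continuous image of the dual ball of the ambient space, is still Eberlein compact). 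Since you invoke only the true implication, this is a blemish of formulation, not a gap. Second, in the converse direction, your set $W$ of scaled finite products of coordinate functions, the pointwise-compactness analysis exploiting that points of $K$ lie in $c_0(\Gamma)$, Grothendieck's criterion to upgrade pointwise compactness to weak compactness, and Stone--Weierstrass to get $\overline{\mathrm{span}}\,W=C(K)$ constitute exactly the classical argument; but the normalization $\|x\|_\infty\le 1$ needs a line of justification, because a compact subset of $c_0(\Gamma)$ in the topology inherited from $\er^\Gamma$ need not be bounded in the supremum norm (consider $\{n e_n:n\in\omega\}\cup\{0\}$, where $e_n$ is the $n$-th unit vector). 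This is easily repaired: each coordinate function is continuous, hence bounded on the compact set $K$, so the coordinatewise rescaling $x(\gamma)\mapsto x(\gamma)/\bigl(1+\sup_{y\in K}|y(\gamma)|\bigr)$ is a homeomorphism of $\er^\Gamma$ onto itself that maps $c_0(\Gamma)$ into itself and carries $K$ onto a homeomorphic copy inside the unit ball, after which your argument runs verbatim.
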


Let us recall that the class of $WCG$ Banach spaces in not stable under taking subspaces.
We shall use the following fact which is a very particular case of  a result due to Fabian \cite{Fa87}.

\begin{theorem}[Fabian]\label{wcg:1}
Let $K$ be a compact scattered space. If $\fZ$ is a WCG subspace of $C(K)$ then every subspace $\fY$ of $\fZ$ is WCG too.
\end{theorem}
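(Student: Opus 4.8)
The plan is to reduce the statement to two structural facts: that scatteredness of $K$ forces the ambient space to be Asplund, and that Asplund WCG spaces are hereditarily WCG via an embedding into some $c_0(\Gamma)$. First I would note that, $K$ being scattered, $C(K)$ is an Asplund space (the classical equivalence: $K$ is scattered iff $C(K)$ is Asplund). Since the Asplund property passes to subspaces, both $\fZ$ and every subspace $\fY\sub\fZ$ are Asplund; in particular $\fZ$ is at once WCG and Asplund.

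Next I would invoke the Amir--Lindenstrauss theory in sharpened form: a Banach space that is simultaneously WCG and Asplund embeds isomorphically into $c_0(\Gamma)$ for a suitable index set $\Gamma$. For $\fZ$ one first extracts from a weakly compact generating set a bounded one-to-one operator $T\colon\fZ\to c_0(\Gamma)$ (the genuine content of Amir--Lindenstrauss), and then uses that $\fZ$ is Asplund --- equivalently that $(B_{\fZ^*},w^*)$ is norm-fragmentable --- to upgrade $T$ to an isomorphism onto its range. Thus $\fZ$, and hence its subspace $\fY$, may be regarded as a closed subspace of $c_0(\Gamma)$.

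It then remains to prove that every closed subspace of $c_0(\Gamma)$ is WCG; applied to $\fY$ this completes the argument, and it is here that the real work lies. My approach would be a transfinite construction, by induction on the density of the subspace, of a projectional resolution of the identity adapted to the subspace: exploiting the $1$-unconditional coordinate structure of $c_0(\Gamma)$, one builds an increasing chain of norm-one projections $(P_\alpha)$ with separable increments $(P_{\alpha+1}-P_\alpha)\fY$, each $P_\alpha$ governed by a countable subset of $\Gamma$. From such a resolution the desired weakly compact generator is assembled by choosing a generating weakly compact set inside each separable increment and combining these with weights tending to zero, so that the resulting union is weakly compact. Producing a projectional resolution compatible with an \emph{arbitrary} subspace, and then extracting the weakly compact generator from it, is the main obstacle; the remaining ingredients are either classical duality or the soft fact that $C(K)$ is Asplund for scattered $K$.
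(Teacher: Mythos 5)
Your plan collapses at its second step: it is not true that a Banach space which is simultaneously WCG and Asplund embeds isomorphically into some $c_0(\Gamma)$. The genuine content of Amir--Lindenstrauss is indeed a bounded one-to-one operator $T\colon\fZ\to c_0(\Gamma)$, but no fragmentability assumption can upgrade $T$ to an isomorphism onto its range, because the conclusion itself is false. The space $\ell_2$ is reflexive, hence both WCG and Asplund, yet it embeds into no $c_0(\Gamma)$: every infinite-dimensional closed subspace of $c_0(\Gamma)$ contains an isomorphic copy of $c_0$ (a separable subspace is supported by countably many coordinates, so it sits inside an isometric copy of $c_0$, and $c_0$ is $c_0$-saturated by the classical Bessaga--Pe\l czy\'nski argument), whereas the reflexive space $\ell_2$ contains no copy of $c_0$. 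The failure persists inside the exact setting of the theorem: for the compact scattered space $K=[0,\omega^{\omega}]$, the space $\fZ=C(K)$ is a separable, hence WCG, subspace of $C(K)$, but it does not embed into $c_0$ (the Szlenk index of $C([0,\omega^{\omega}])$ is $\omega^{2}$, that of $c_0$ is $\omega$, and the Szlenk index is monotone under isomorphic embeddings), hence into no $c_0(\Gamma)$. So the class you reduce to is strictly too small to capture the spaces the theorem must cover, and the argument cannot get started.

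There is a second, independent problem. Your final assembly principle --- a projectional resolution of the identity with separable increments, with weakly compact generators of the increments glued together by weights tending to zero --- is not valid: $C([0,\omega_1])$ (again a $C(K)$ with $K$ scattered, hence Asplund) carries the canonical PRI with separable increments, yet it is not WCG, so no choice of sets in the increments can assemble into a weakly compact generator. The statement you want in that step, that every closed subspace of $c_0(\Gamma)$ is WCG, is true, but it is essentially the theorem being proved, specialized to $K$ equal to the one-point compactification of a discrete set $\Gamma$ (then $c_0(\Gamma)$ is a hyperplane of $C(K)$); proving it requires more than the sketched induction. For the record, the paper does not prove this theorem at all: it quotes it from Fabian's work, and the mechanism behind that citation is different from yours. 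One needs an intermediate class $\mathcal C$ with three properties: it contains all WCG spaces, it is hereditary, and its Asplund members are WCG. Your class ``subspaces of $c_0(\Gamma)$'' has the last two properties but not the first. The class of weakly countably determined (Va\v{s}\'ak) spaces has all three: $\fY$ is WCD as a subspace of the WCG space $\fZ$, it is Asplund as a subspace of $C(K)$ with $K$ scattered, and Fabian's theorem (WCD Asplund spaces admit Fr\'echet differentiable norms and are in fact WCG) concludes.
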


In the spirit of reflection problems considered above, one can wonder if,  for a Banach space $\fX$, the property of being WCG reflects in subspaces of small density.
The following reformulation of  Theorem \ref{st:5} gives a partial negative answer to such a question.

\begin{theorem}\label{wcg:2}
Under \ref{ax} there exists a Banach space $\fX$ of density $\omega_2$ such that

\begin{enumerate}[(i)]
\item $\fX$ is not weakly compactly generated;

\item every subspace $\fY\sub \fX$ of density $\le\omega_1$ is weakly compactly generated.
\end{enumerate}
\end{theorem}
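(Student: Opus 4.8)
The plan is to reduce the statement about Banach spaces to the topological facts already established for the space $K$ from Theorem \ref{st:5}. The natural candidate is $\fX = C(K)$, where $K = \ult(\alg)$ is the scattered compact space constructed in \ref{construction}. First I would verify the density claim: since $w(K) = \omega_2$ (the algebra $\alg$ has $\omega_2$ generators and no smaller generating family, as witnessed by the non-reflection argument), and for compact $K$ one has $\mathrm{dens}(C(K)) = w(K)$, it follows that $\fX$ has density $\omega_2$.

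For part (i), I would invoke Theorem \ref{wcg:0}: $C(K)$ is WCG if and only if $K$ is Eberlein compact. By Theorem \ref{st:5}(i), $K$ is not even Corson compact, hence certainly not Eberlein compact, so $C(K)$ is not WCG. This is immediate.

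The main work is part (ii), and here the key obstacle is passing from an \emph{arbitrary} subspace $\fY \sub C(K)$ of density $\le \omega_1$ to one of the continuous images controlled by Theorem \ref{st:5}(ii). The plan is as follows. Given such $\fY$, choose a dense subset of size $\le\omega_1$; each of these functions is a continuous map $K \to \er$, generating a family $\cF$ of continuous functions with $|\cF| \le \omega_1$. Form the diagonal map $g : K \to \er^{\cF}$ and let $L = g[K]$, a continuous image of $K$ with $w(L) \le \omega_1$. By Theorem \ref{st:5}(ii), $L$ is uniform Eberlein compact, in particular Eberlein compact, so by Theorem \ref{wcg:0} the space $C(L)$ is WCG. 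The composition operator $g^* : C(L) \to C(K)$, $h \mapsto h \circ g$, is an isometric embedding whose range contains $\cF$ and hence contains $\fY$ (since $\fY = \overline{\mathrm{span}}(\cF)$ lies in the closed range of $g^*$). Thus $\fY$ is (isometric to) a subspace of the WCG space $C(L)$.

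Finally I would apply Fabian's Theorem \ref{wcg:1}: since $K$ is scattered, $L$ is a scattered compact space too (being a continuous image of a scattered compactum), so $C(L)$ is WCG over a scattered compact space, and \emph{every} subspace of a WCG subspace of $C(L)$ is again WCG. As $\fY$ is a subspace of the WCG space $C(L)$, it is WCG, completing (ii). I expect the only delicate point to be confirming that $\fY$ genuinely embeds into the range of $g^*$ --- one must check that the chosen separating family $\cF$ can be taken to generate a dense subspace of $\fY$, which is handled by selecting $\cF$ to be a dense subset of $\fY$ itself rather than merely a point-separating family; then $g^*[C(L)]$ is a closed subspace of $C(K)$ containing the dense set $\cF$, forcing $\fY \sub g^*[C(L)]$.
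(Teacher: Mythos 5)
Your proposal is correct and follows essentially the same route as the paper: take $\fX=C(K)$, use the diagonal map to produce a small continuous image $L$, apply Theorem \ref{wcg:0} to see $C(L)$ is WCG, and finish with Fabian's Theorem \ref{wcg:1}. The only cosmetic difference is that you apply Fabian's theorem inside $C(L)$ (which requires noting that $L$, as a continuous image of a scattered compactum, is scattered), whereas the paper applies it inside $C(K)$ to the isometric copy $\fZ=\{h\circ\theta: h\in C(L)\}\sub C(K)$, so that only the scatteredness of $K$ itself is needed.
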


\begin{proof}
We take the space $K$ as if Theorem \ref{st:5} and consider $\fX=C(K)$. Then $\fX$ is not WCG by Theorem \ref{wcg:0}, since $K$ is not Eberlein compact.
As the weight of $K$ is $\omega_2$, the space $\fX$ is of density $\omega_2$.

Take any subspace $\fY\sub \fX$ of density $\le\omega_1$. Then there is a family $\FF\sub Y$ of size $\le\omega_1$ which is dense in $\fY$.
Let $\theta:K\to\er^\FF$ be the diagonal mapping, that is $\theta(x)(f)=f(x)$ for $x\in K$ and $f\in\FF$. Then $L=\theta[K]$ is a continuous image of $K$
of weight $\le\omega_1$ so $L$ is Eberlein compact  by Theorem \ref{st:5}(ii) and the Banach space $C(L)$ is WCG by Theorem \ref{wcg:0}.

The space $\fZ=\{h\circ\theta: h\in C(L)\}$ is an isometric copy of $C(L)$ so $\fZ$ is WCG too. Now $\fY$  is clearly a subspace of $\fZ$ so $\fY$ is WCG by
Theorem \ref{wcg:1}, and the proof is complete.
\end{proof}

\section{On countable functional tightness}

\begin{definition}\label{ft:1}
For a topological space $X$ and a cardinal number $\kappa$ we write $t_0(X)\le\kappa$
if  every function $f:X\to\er$ is continuous provided
$f_{|Y}:Y\to\er$ is continuous for every  subspace $Y\sub X$ with $|Y|\le\kappa$.
The corresponding cardinal number $t_0(X)$ is called
the {\em functional tightness} of the space $X$.
\end{definition}

Recall that $t(X)$, the tightness of a space $X$ is defined so that for every $A\sub X$ and every $x\in\overline{A}$ there is
$B\sub A$ such that $|B|\le t(X)$ and $x\in \overline{B}$.
The following fact can be found in  \cite{Ar83}.

\begin{lemma}\label{ft:2}
The functional tightness $t_0(X)$ does not exceed the density of $X$ for every  space $X$.
Moreover, $t_0(X)\le t (X)$.
\end{lemma}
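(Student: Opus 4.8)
The plan is to establish the two inequalities by separate arguments, since density and tightness are not comparable in general. Throughout I use the equivalent contrapositive reading of Definition \ref{ft:1}: a cardinal $\kappa$ fails to bound $t_0(X)$ only if there is some $f\colon X\to\er$ that is discontinuous while $f\uhr Y$ is continuous for every $Y\sub X$ with $|Y|\le\kappa$. So in each case I assume $f\uhr Y$ is continuous for all small $Y$ and deduce that $f$ itself is continuous.

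For the bound $t_0(X)\le t(X)$ I would argue directly by contraposition. Suppose $f\colon X\to\er$ is discontinuous at a point $x$, so there is $\eps>0$ for which the set $B=\{y\in X:|f(y)-f(x)|\ge\eps\}$ satisfies $x\in\ol B$ (every neighbourhood of $x$ must meet $B$, as otherwise $f$ would stay within $\eps$ of $f(x)$ near $x$). By the definition of tightness there is $B'\sub B$ with $|B'|\le t(X)$ and $x\in\ol{B'}$. Put $Y=B'\cup\{x\}$, so $|Y|\le t(X)$. Then $f\uhr Y$ is discontinuous at $x$, because the preimage of $(f(x)-\eps,f(x)+\eps)$ under $f\uhr Y$ contains $x$ but misses $B'$, hence is not a neighbourhood of $x$ in $Y$ although $x\in\ol{B'}$. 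This is exactly a subspace of size $\le t(X)$ witnessing discontinuity, giving $t_0(X)\le t(X)$.

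For $t_0(X)\le d(X)$ let $\kappa$ be the density of $X$, fix a dense $D\sub X$ with $|D|\le\kappa$, and assume $f\uhr Y$ is continuous for all $|Y|\le\kappa$; I want $f$ continuous at an arbitrary $x$, for an arbitrary $\eps>0$. First I would apply the hypothesis to $D\cup\{x\}$: continuity of $f\uhr(D\cup\{x\})$ at $x$ produces an open $U\ni x$ with $|f(y)-f(x)|<\eps/2$ for all $y\in U\cap D$. This only controls $f$ on the dense part of $U$, so the remaining points of $U$ still have to be reached. For an arbitrary $z\in U$ I would invoke the hypothesis a second time, now for $D\cup\{x,z\}$: continuity at $z$ gives an open $V\ni z$ with $|f(y)-f(z)|<\eps/2$ on $V\cap D$. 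Since $V\cap U$ is a nonempty open set, density yields a point $w\in D\cap V\cap U$, and the triangle inequality through $w$ gives $|f(z)-f(x)|<\eps$; hence $f(U)\sub(f(x)-\eps,f(x)+\eps)$ and $f$ is continuous at $x$. The only genuinely non-formal step is this last bridging move: continuity of $f$ on a dense subspace does not by itself force continuity on $X$, and the device that rescues the argument is to re-apply the hypothesis to the enlarged set $D\cup\{x,z\}$ for each individual $z$, comparing $f(z)$ with $f(x)$ through an intermediate $D$-point supplied by density. The tightness bound, by contrast, is essentially immediate once the definitions are unwound.
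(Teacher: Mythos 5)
Your proof is correct in both parts. Note that the paper itself does not prove Lemma \ref{ft:2}; it simply quotes the fact from Arhangel'skii \cite{Ar83}, so a self-contained argument such as yours is genuinely additional content rather than a variant of something in the text. Your tightness bound is the classical argument: if $f$ is discontinuous at $x$, then for some $\eps>0$ the set $B=\{y\in X:|f(y)-f(x)|\ge\eps\}$ has $x$ in its closure, and a tightness witness $B'\sub B$ with $|B'|\le t(X)$ together with $x$ forms a subspace of size $\le t(X)$ on which $f$ is already discontinuous; this is airtight. The density bound is the part that needs an actual idea, and your device is the right one: applying the hypothesis not only to a dense set $D$ but to the enlarged sets $D\cup\{x\}$ and $D\cup\{x,z\}$ for each individual $z\in U$, and then bridging $|f(z)-f(x)|$ through a point $w\in D\cap U\cap V$ supplied by density. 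You are also right to flag that continuity of $f\uhr D$ alone would not suffice (a function can be continuous on a dense set yet discontinuous globally), so the per-point re-application of the hypothesis is essential and your write-up handles it correctly, including the fact that subspace continuity at $z$ yields a genuine open $V\ni z$ in $X$ controlling $f$ on $V\cap D$. Together the two estimates give $t_0(X)\le\min\{d(X),t(X)\}$ (with $d(X)$ the density), which is exactly the statement of the lemma.
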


Tkachuk (see Theorem 2.11 in \cite{Tk12}) proved that if $K$ is a compact space of uncountable
tightness then $K$  has a continuous image $L$ of uncountable tightness with $w(L)=\omega_1$.

Recall that $t_0(2^\kappa)=\omega$ if and only if there are no measurable cardinals
$\le\kappa$, see Uspenskii \cite{Us83}, cf.\ \cite{Pl91}. Using this theorem
it is noted in \cite{TT15} that if there are measurable cardinals then
the countable functional tightness does  not reflect in small continuous images of compacta.

Let $\lambda$ be the Lebesgue measure on $[0,1]$. Write $\NN$ for the ideal of $\lambda$-null
sets. Recall that the assertion $\cov(\NN)>\omega_1$ means that $[0,1]$ cannot be covered
by $\omega_1$-many sets from $\NN$.

We shall work in the measure algebra $\alg$ of the Lebesgue measure on $[0,1]$;
the corresponding measure on $\alg$ is still denoted by $\lambda$. The following is an immediate
consequence of a result due to Kamburelis \cite{Ka89}, Lemma 3.1; see also
\cite{BNP15}, Theorem 4.4.

\begin{theorem}\label{ft:2.5}
If $\cov(\NN)>\omega_1$ then every continuous image of $\ult(\alg)$ of $\pi$-weight $\le\omega_1$
is separable.
\end{theorem}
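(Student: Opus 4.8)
The plan is to pull a $\pi$-base of $L$ back to a family of nonzero elements of $\alg$, and then use $\cov(\NN)>\omega_1$ to capture all of them by countably many ultrafilters, whose images will be the required countable dense subset of $L$. First I fix a continuous surjection $f\colon\ult(\alg)\to L$ and a $\pi$-base $\{V_\xi:\xi<\omega_1\}$ of $L$ witnessing that the $\pi$-weight is $\le\omega_1$, where each $V_\xi$ is nonempty and open. For every $\xi$ the set $f^{-1}(V_\xi)$ is nonempty and open in $\ult(\alg)$, hence it contains a basic clopen set $\widehat{a_\xi}$ with $a_\xi\in\alg$ and $a_\xi\neq 0$. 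This produces a family $\{a_\xi:\xi<\omega_1\}\sub\alg\sm\{0\}$ indexed by the $\pi$-base.

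The crux is the following consequence of \cite{Ka89} (Lemma 3.1), see also \cite{BNP15} (Theorem 4.4): if $\cov(\NN)>\omega_1$, then for every family $\{a_\xi:\xi<\omega_1\}$ of nonzero elements of $\alg$ there are countably many ultrafilters $x_0,x_1,\ldots\in\ult(\alg)$ such that each $a_\xi$ belongs to some $x_n$. Granting this, I set $D=\{f(x_n):n<\omega\}$; this set is countable, and I claim it is dense in $L$. Indeed, take any nonempty open $U\sub L$. Since $\{V_\xi\}$ is a $\pi$-base there is $\xi$ with $V_\xi\sub U$, and there is $n$ with $a_\xi\in x_n$, that is $x_n\in\widehat{a_\xi}\sub f^{-1}(V_\xi)$; hence $f(x_n)\in V_\xi\sub U$, so $D\cap U\neq\emptyset$. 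Thus $L$ is separable, which is the assertion. This last step is routine, matching the description of the theorem as an immediate consequence of the quoted lemma.

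It is worth recording why the measure algebra and the hypothesis $\cov(\NN)>\omega_1$ are the right ingredients, and where the real difficulty lies. The image measure $\mu=f_*\lambda$ is a strictly positive Radon measure on $L$, since for nonempty open $U$ one has $\mu(U)\ge\lambda(a)>0$ whenever $\widehat a\sub f^{-1}(U)$; moreover $\mu$ has countable Maharam type, because $B\mapsto f^{-1}(B)$ descends to an injective measure-algebra homomorphism of $\mathfrak{M}_\mu$ into the separable Lebesgue algebra $\alg$. This is precisely the setting of \cite{BNP15} (Theorem 4.4), so the entire weight of the proof rests on the quoted covering statement, and I expect that to be the main obstacle. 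The naive attempt---representing each $a_\xi$ by a positive-measure Borel set $A_\xi\sub[0,1]$ and seeking countably many points of $[0,1]$ meeting all the $A_\xi$ (for instance via density-point ultrafilters)---cannot work in general, as $\omega_1$ many sets of measure $<1$ need not admit a countable transversal. One must instead exploit the full space of ultrafilters on $\alg$, and it is exactly here that $\cov(\NN)>\omega_1$ enters, splitting $\{a_\xi:\xi<\omega_1\}$ into countably many centred subfamilies, each of which is then contained in a single ultrafilter. I would treat this measure-combinatorial core as a black box supplied by \cite{Ka89,BNP15} rather than reprove it.
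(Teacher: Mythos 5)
Your proof is correct and takes essentially the paper's approach: the paper gives no argument beyond citing Kamburelis's Lemma 3.1 (see also \cite{BNP15}, Theorem 4.4) as a black box, and the derivation you spell out --- pulling a $\pi$-base back to nonzero $a_\xi\in\alg$, covering $\{a_\xi:\xi<\omega_1\}$ by countably many ultrafilters, and pushing these forward to a countable dense set --- is exactly the intended ``immediate consequence.'' One small quibble with your closing commentary: the density-point/transversal route you dismiss does in fact work under $\cov(\NN)>\omega_1$, since for positive-measure sets $D_\xi\sub[0,1]$ the set of sequences in $[0,1]^\omega$ meeting no fixed $D_\xi$ is null for the product measure, so $\cov(\NN)>\omega_1$ (applied to $[0,1]^\omega$, whose null ideal is isomorphic to $\NN$) yields a countable transversal of density points, and extending the resulting density filters to ultrafilters is essentially how the quoted lemma is proved.
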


We can now give   a (partial) negative solution to Question 4.3 from \cite{TT15}.

\begin{theorem}\label{ft:3}
Assuming $\cov(\NN)>\omega_1$, there is a compact space $S$ with $t_0(S)>\omega$, such that
$t_0(L)=\omega$ for every continuous image $L$ of $S$ of weight $\omega_1$.
\end{theorem}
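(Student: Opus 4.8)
The plan is to produce $S$ as the Stone space $\ult(\alg)$ of the Lebesgue measure algebra $\alg$, which is exactly the space appearing in Theorem~\ref{ft:2.5}. First I would verify the easy half, namely that $t_0(S)>\omega$. By Lemma~\ref{ft:2} we have $t_0(S)\le t(S)$, so it suffices to exhibit a discontinuous function on $S$ that is continuous on all countable subspaces; the natural candidate is a function built from a long $\le\omega_1$-indexed family in $\alg$ whose union is "everywhere" but which no countable piece sees. Concretely, the measure algebra has uncountable tightness (there is a point in the closure of a set that is in the closure of no countable subset), and one leverages precisely this failure of countable tightness together with the structure of $\ult(\alg)$ to define $f:S\to\er$ witnessing $t_0(S)>\omega$.

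Next, and this is the heart of the argument, I would show $t_0(L)=\omega$ for every continuous image $L$ of $S$ with $w(L)=\omega_1$. The key is to route through separability: by Theorem~\ref{ft:2.5}, the hypothesis $\cov(\NN)>\omega_1$ forces every continuous image of $\ult(\alg)$ of $\pi$-weight $\le\omega_1$ to be separable. Since weight bounds $\pi$-weight, any continuous image $L$ of weight $\omega_1$ has $\pi$-weight $\le\omega_1$ and is therefore separable. But a separable space has density $\le\omega$, and Lemma~\ref{ft:2} tells us $t_0(L)$ does not exceed the density of $L$; hence $t_0(L)\le\omega$, and so $t_0(L)=\omega$. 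This is the clean chain: weight $\omega_1 \Rightarrow$ $\pi$-weight $\le\omega_1 \Rightarrow$ separable (by Theorem~\ref{ft:2.5} under $\cov(\NN)>\omega_1$) $\Rightarrow$ density $\le\omega \Rightarrow$ $t_0 = \omega$ (by Lemma~\ref{ft:2}).

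The main obstacle I expect is the first half rather than the second: establishing $t_0(S)>\omega$ rigorously. The reduction for small images is almost formal once Theorem~\ref{ft:2.5} is in hand, but producing an explicit discontinuous function whose restriction to every countable subset is continuous requires some care with the combinatorics of the measure algebra. The strategy here is to exploit that $\alg$, while ccc, fails to have countable tightness at the level of its Stone space: one picks a point $x_0\in\ult(\alg)$ (for instance the ultrafilter associated to a non-principal limit) lying in the closure of a set $D$ such that $x_0\notin\ol{D_0}$ for every countable $D_0\sub D$, and defines $f$ to be, say, the indicator of $D\cup\{x_0\}$ or a suitable modification thereof. One then checks that $f$ is discontinuous at $x_0$ while its restriction to any countable $Y$ is continuous, because on a countable set the relevant closure relations collapse. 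Verifying that the chosen $D$ and $x_0$ have the stated closure properties is where the real work lies, and it is tied to the uncountable tightness of $\ult(\alg)$; I would isolate this as a separate claim and prove it using the measure-theoretic structure (e.g.\ by taking $D$ indexed so that finite intersections stay of positive measure while countable subfamilies miss a set of positive measure containing the relevant ultrafilter).
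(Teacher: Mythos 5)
Your second half is fine and is exactly the paper's argument: weight $\le\omega_1$ bounds the $\pi$-weight, Theorem \ref{ft:2.5} then gives separability from $\cov(\NN)>\omega_1$, and Lemma \ref{ft:2} bounds $t_0(L)$ by the density, so $t_0(L)=\omega$. The genuine gap is in your first half, where you propose to witness $t_0(S)>\omega$ by the indicator of $D\cup\{x_0\}$ for a pair witnessing uncountable tightness ($x_0\in\ol{D}$ but $x_0\notin\ol{D_0}$ for every countable $D_0\subseteq D$). This cannot work as stated. The inequality $t_0(S)\le t(S)$ that you cite gives no lower bound on $t_0$ from tightness; in fact uncountable tightness is perfectly compatible with countable functional tightness. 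A sharp illustration: $\beta\omega$ is separable, so $t_0(\beta\omega)=\omega$ by Lemma \ref{ft:2}, although $t(\beta\omega)>\omega$; moreover $\beta\omega$ embeds into $S$ (which is an infinite, compact, extremally disconnected space), so $S$ contains tightness-witnessing pairs $(x_0,D)$ for which your indicator function must fail to be continuous on some countable subspace. The concrete failure point is that your hypotheses on $(x_0,D)$ control only closures of countable subsets of $D$, whereas continuity of $\chi_{D\cup\{x_0\}}$ on a countable $Y\ni x_0$ also requires that $x_0$ not lie in the closure of the countable set $Y\setminus(D\cup\{x_0\})$, where the function is $0$ --- and nothing in the choice of $D$ prevents countable sets disjoint from $D$ from accumulating at $x_0$. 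Continuity at the points of $Y\cap D$ and of $Y\setminus(D\cup\{x_0\})$ is likewise unjustified.

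What is actually needed --- and what the paper's Lemma \ref{ft:4} constructs --- is a condition with the quantifier on the other side: a nonempty closed set $F\subseteq S$ with empty interior such that $\ol{Y}\cap F=\emptyset$ for \emph{every countable $Y\subseteq S\setminus F$}. Given such an $F$, the function $\chi_F$ is discontinuous (a nonempty closed set with empty interior cannot be open), while for any countable $Y$ both $Y\cap F$ (since $F$ is closed) and $Y\setminus F$ (by the stated property) are relatively closed in $Y$, hence both are clopen in $Y$ and $\chi_F$ restricted to $Y$ is continuous. The paper obtains $F$ from measure theory rather than from tightness: fix a pairwise disjoint sequence $(s_n)$ in $\alg^+$, let $\FF=\{a\in\alg:\ \lim_n \lambda(a\cap s_n)/\lambda(s_n)=1\}$, and let $F$ be the set of ultrafilters extending the filter $\FF$; the key property is then proved by a Kunen-style diagonalization which, for a given countable $Y=\{y_n\}\subseteq S\setminus F$, produces a single element $a\in\alg$ with $Y\subseteq\widehat{a}$ and $\widehat{a}\cap F=\emptyset$. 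Your closing suggestion (``finite intersections of positive measure while countable subfamilies miss a set of positive measure'') does not supply this; the disjoint sequence, the density filter along it, and the diagonalization are the essential missing ingredients.
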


Our result is based on the construction described in the following lemma.

\begin{lemma}\label{ft:4}
Let $(s_n)_{n}$ be a pairwise disjoint sequence in $\alg^+$. Let
\[\FF=\{a\in\alg: \lim_n \lambda(a\cap s_n)/\lambda(s_n)=1\},\]
 \[F=\{x\in\ult(\alg): \FF\sub x\}.\]
Then

\begin{enumerate}[(i)]
\item  $\FF$ is a non-principal filter in $\alg$;
\item $F$ is a closed subset of $\ult(\alg)$ with empty interior;
\item for every countable $Y\sub \ult(\alg)\sm F$ we have $\overline{Y}\cap F=\emptyset$.\label{w2b}
\end{enumerate}
\end{lemma}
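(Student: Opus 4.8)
The plan is to dispatch (i) and (ii) quickly and to concentrate the real work on (iii). Write $t=\bigcup_n s_n$. For (i), checking that $\FF$ is a filter is routine: $1\in\FF$, $0\notin\FF$, upward closure is immediate since the ratios are bounded by $1$, and if $a,a'\in\FF$ then from $\lambda((a\cap a')\cap s_n)\ge \lambda(a\cap s_n)+\lambda(a'\cap s_n)-\lambda(s_n)$ one gets $\lim_n\lambda((a\cap a')\cap s_n)/\lambda(s_n)=1$. To see that $\FF$ is non-principal I would prove the stronger fact $\bigwedge\FF=0$, which will also give (ii). Given nonzero $b\in\alg$: if $b\cap t^c\neq 0$ then $t\in\FF$ and $b\not\le t$; if $b\le t$, pick $n_0$ with $e:=b\cap s_{n_0}\neq 0$ and put $a=t\sm e$, so that $a\in\FF$ (only the single index $n_0$ has ratio below $1$, so the limit is still $1$) while $b\cap a^c\supseteq e\neq 0$. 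Hence no nonzero element lies below every member of $\FF$, so $\FF$ is a proper non-principal filter.

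For (ii), $F=\bigcap_{a\in\FF}\widehat a$ is closed as an intersection of clopen sets. If $F$ had nonempty interior it would contain some nonempty basic set $\widehat b$ with $b\neq 0$; then $\widehat b\sub\widehat a$, i.e.\ $b\le a$, for every $a\in\FF$, forcing $b\le\bigwedge\FF=0$, a contradiction.

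The heart of the matter is (iii). Let $Y=\{x_k:k<\omega\}$ with each $x_k\notin F$, and suppose towards a contradiction that some $z\in\ol Y\cap F$. Since $x_k\notin F$ there is $a_k\in\FF$ with $a_k\notin x_k$; put $c_k=a_k^c\in x_k$, so $\lim_n\lambda(c_k\cap s_n)/\lambda(s_n)=0$. It suffices to produce a single $b\in\FF$ with $b\notin x_k$ for all $k$: then $b\in z$ (as $\FF\sub z$) and $\widehat b$ is a neighbourhood of $z$ missing every $x_k$, contradicting $z\in\ol Y$. Equivalently, writing $d=b^c$, I must build $d\in\alg$ with $\lim_n\lambda(d\cap s_n)/\lambda(s_n)=0$ and $d\in x_k$ for every $k$.

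I would construct $d$ according to the position of each $x_k$ relative to $t$, which splits the indices into three types: (out) $t\notin x_k$; (diffuse) $t\in x_k$ but $s_m\notin x_k$ for all $m$; (concentrated) some unique $s_{m_k}\in x_k$. For the out indices put $c_k\cap t^c$ into $d$; this lies in $x_k$ and, being disjoint from every $s_n$, costs no density. For the diffuse indices every tail $\bigcup_{n\ge N}s_n$ belongs to $x_k$, so $c_k\cap\bigcup_{n\ge N_k}s_n\in x_k$; a standard diagonal choice of increasing thresholds $N_k$ — with $\lambda(c_j\cap s_n)/\lambda(s_n)\le 1/k^2$ for all $n\ge N_k$ and all diffuse $j\le k$ — makes the union of these pieces have density $0$, since for $N_K\le n<N_{K+1}$ the at most $K$ contributing terms sum to at most $1/K$. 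The main obstacle is the concentrated case, where $c_k\cap s_{m_k}$ may fill up $s_{m_k}$ and many $x_k$ may pile onto the same $s_m$, so one cannot simply include $c_k\cap s_{m_k}$. Here I would use that the Lebesgue measure algebra is atomless, whence every ultrafilter contains subsets of arbitrarily small measure: choose $r_k\in x_k$ with $r_k\le s_{m_k}$ and $\lambda(r_k)\le 2^{-k}\lambda(s_{m_k})$, and put $r_k$ into $d$. Then $d\cap s_n$ receives, from the concentrated part, total measure at most $\sum_{k:m_k=n}\lambda(r_k)\le g(n)\lambda(s_n)$ with $g(n)=\sum_{k:m_k=n}2^{-k}$; since $\sum_n g(n)\le\sum_k 2^{-k}<\infty$, the summability forces $g(n)\to 0$. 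Taking $d$ to be the union of the out, diffuse and concentrated pieces, each $x_k$ contains one of its summands, so $d\in x_k$, while $\lambda(d\cap s_n)/\lambda(s_n)\to 0$; thus $b=d^c\in\FF$ is the required separating element, completing the contradiction and the proof of (iii).
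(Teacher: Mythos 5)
Your proof is correct, and its skeleton is the same as the paper's: both arguments reduce (iii) to producing, for each point $x_k\in Y$, an element of the ultrafilter $x_k$ whose union $d$ over all $k$ has vanishing relative density along $(s_n)$, so that $\widehat{d}$ is a clopen set containing $Y$ and disjoint from $F$ (equivalently $b=d^c\in\FF$ separates $F$ from $Y$). The difference lies in how that union is built. The paper proceeds uniformly: inside every $y_k\notin F$ it picks $a^k_0\in y_k$ with $-a^k_0\in\FF$, shrinks it (using atomlessness) to a decreasing sequence $a^k_j\in y_k$ with $\lambda(a^k_j)\to 0$, and then invokes a ``standard diagonalization'' (cf.\ Kunen) to select one $a^k_{g(k)}$ per point so that the union has density zero; the details of that diagonalization are not spelled out. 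You instead organize the construction by a trichotomy relative to $t=\bigcup_n s_n$: points omitting $t$ contribute sets disjoint from all $s_n$; diffuse points are handled by truncating the density-zero sets $c_k$ to tails $\bigcup_{n\ge N_k}s_n$, which needs no atomlessness at all; and atomlessness enters only at the concentrated points, via small pieces $r_k\le s_{m_k}$ and a summability estimate. So your argument is a complete, self-contained proof of exactly the paper's Claim, and it isolates where atomlessness of the measure algebra is genuinely used. A further small gain: you derive (ii) from the stronger statement $\bigwedge\FF=0$, which is more precise than the paper's remark that (ii) is a direct consequence of (i) --- non-principality of a filter alone does not exclude a nonzero lower bound, and it is precisely the absence of such a lower bound that makes the interior of $F$ empty.
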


\begin{proof}
Part (i) follows by standard calculations and part (ii) is a direct consequence of (i). We shall check (iii). Let $Y=\{y_n: n\in\omega\}\sub\ult(\alg)\sm F$. For every $n$ we have $y_n\notin F$ so there
is $a^n_0\in y_n $ such that $-a^n_0\in \FF$. Then we choose
a decreasing sequence $(a^n_k)_k$ such that

\begin{enumerate}[(a)]
\item $a^n_k\leq a^n_0$, $a^n_k\in y_n$ for every $k$
\item $\lim_k\lambda (a^n_k)=0$.
\end{enumerate}

The following fact can be proved by a standard diagonalization (cf.\ \cite{Ku75}).
\medskip

{\sc Claim.} There is a function $g:\omega\to\omega$ such that writing
$ a_g:=\bigcup_{n\in\omega}a^n_{g(n)}$,
we have $\widehat{a_g}\cap F=\emptyset$.
\medskip

Using Claim we get $Y\sub \widehat{a_g}$ and it follows that $\overline{Y}\cap F=\emptyset$.
\end{proof}

\begin{proof} (of Theorem \ref{ft:3}).
Let $S$ be the Stone space of the measure algebra $\alg$. Take the set $F\sub S$
from Lemma \ref{ft:4}. Then condition (\ref{w2b}) implies that the function $\chi_F:S\to\er$
is continuous on every countable subspace of $S$. But $\chi_F$ is clearly not continuous
because the interior of $F$ is empty. Hence $t_0(S)>0$.

Let now $L$ be a continuous image of $S$ such that $w(L)\le\omega_1$.
Then $L$  is separable by Theorem \ref{ft:2.5} and $t_0(L)=\omega$ by Lemma \ref{ft:2}, so the proof is complete.
\end{proof}

\begin{remark} We enclose some remarks concerning Theorem \ref{ft:3}

\begin{enumerate}
\item Lemma \ref{ft:3} originates in Kunen \cite{Ku75}; see Plebanek \cite{Pl98}
for other applications.
\item In fact, under MA$(\omega_1)$ one can check that in the proof we actually get $t_0(S)>\omega_1$,
since under MA$(\omega_1)$ one can strengthen (\ref{w2b}) of Lemma \ref{ft:4} to saying that
$\overline{Y}\cap F=\emptyset$ for every $Y\sub S\sm F$ with $|Y|\le\omega_1$.
\item The proof of \ref{ft:3} says a bit more,  that $t_0(L)=\omega$ whenever $L$ is a continuous image of $S$ having
a $\pi$-base of cardinality $\le\omega_1$.
\end{enumerate}

\end{remark}

\end{document}